\newtheorem{theorem}{Theorem}
\newtheorem{proposition}[theorem]{Proposition}
\newtheorem{lemma}[theorem]{Lemma}
\numberwithin{equation}{section}
\begin{document}

\title[Hitting probabilities of a flow]{Hitting probabilities of a Brownian flow with Radial Drift}
\author[Lee]{Jong Jun Lee}
\author[Mueller]{Carl Mueller}
\thanks{The second author was partially supported by a Simons Foundation grant.}
\author[Neuman]{Eyal Neuman}

\address{Jong Jun Lee: Dept. of Bioinformatics
\\University of Texas Southwestern Medical Center
\\Dallas, TX 75390}
\email{JongJun.Lee@UTSouthwestern.edu}

\address{Carl Mueller: Dept. of Mathematics
\\University of Rochester
\\Rochester, NY  14627 }
\urladdr{http://www.math.rochester.edu/people/faculty/cmlr}

\address{Eyal Neuman: Dept. of Mathematics
\\Imperial College London
\\ London, UK  SW7 2AZ}
\urladdr{http://eyaln13.wixsite.com/eyal-neuman}

\keywords{stochastic flow, stochastic differential equations, hitting, Bessel process}

\subjclass[2010]{Primary, 60H10; Secondary, 37C10, 60J45, 60J60.}
\begin{abstract}
We consider a stochastic flow $\phi_t(x,\omega)$ in $\mathbb{R}^n$ with 
initial point $\phi_0(x,\omega)=x$, driven by a single $n$-dimensional 
Brownian motion, and with an outward radial drift of magnitude $\frac{
F(\|\phi_t(x)\|)}{\|\phi_t(x)\|}$, with $F$ nonnegative, bounded and Lipschitz.
We consider initial points $x$ lying in a set of positive distance from the 
origin.  We show that there exist constants $C^*,c^*>0$ not depending on $n$, 
such that if $F>C^*n$ then the image of the initial set under the flow has 
probability 0 of hitting the origin.  If $0\leq F \leq c^*n^{3/4}$, and if the 
initial set has nonempty interior, then the image of the set has positive 
probability of hitting the origin.  
\end{abstract}
\maketitle

\section{Introduction and Main Results}
\label{section:introduction}

In this paper we study a hitting problem of stochastic flows with radial drift. Let $(\Omega,\mathcal F,P)$ be a probability space.
For $x\in\mathbb{R}^n\setminus\{0\}$ we define the unit radial vector
in the $x$ direction as
\begin{equation*}
\mathbf{u}(x)=\frac{x}{\|x\|}
\end{equation*}
and we define
\begin{equation*}
\mathbf{u}(0)=0.
\end{equation*}
Here $\|\cdot\|$ denotes the Euclidean norm.  

We consider a Brownian flow 
$\phi=\{\phi_{t}(x,\omega); \, t\geq0, \,x\in \mathbb{R}^{n},\omega\in\Omega\}$
starting from a set  $\mathcal A_{0}\subset \mathbb{R}^n$ such that 
\begin{align}
\label{eq:flow}
d\phi_{t}(x) &=\frac{F(\|\phi_t(x)\|)}{\|\phi_{t}(x)\|}\mathbf{u}(\phi_t(x))dt +dW^{}_{t} \quad \textrm{for } t\geq 0, \\
\phi_0(x)&= x \in \mathcal A_{0}.  \nonumber
\end{align}
Here, $W_{t}$ is a standard $n$-dimensional Brownian motion, not depending 
on $x$, and $F(\cdot)$ is a nonnegative, bounded Lipschitz function.  
Throughout the paper the differential $d$ is taken with respect to $t$.  We 
also set up the standard Brownian filtration
\begin{equation*}
\mathcal{F}_t=\sigma(W_s: s\leq t).
\end{equation*}

Note that the flow might not be defined after $\phi$ reaches the origin, 
due to the singular drift there.  In fact, we will use a stopping time which 
is intuitively defined as  
\begin{equation} \label{tau-def-phi} 
\tau=\tau_{\mathcal{A}_0}
=\inf\{t: \phi_t(x)=0\text{ for some $x\in\mathcal{A}_0$}\}.
\end{equation}
For additional details we refer the reader to Section \ref{section:setup}.

We are interested in the question of whether the flow can hit the origin 
with positive probability, in the case where the initial set $\mathcal{A}_0$ 
is at positive distance from the origin.  We say that $\phi$ hits 
0 if $\tau<\infty$, and define the corresponding event
\begin{equation*}
G_{\mathcal{A}_0,F,n}:=\{\tau_{\mathcal{A}_0}<\infty\}.
\end{equation*}

Before stating our main results, we give some background on related hitting problems. 
It is well known that for a single Brownian motion $W_t$ in $\mathbb{R}^n$, 
the radial distance $D_t=\|W_t\|$ is a Bessel process which satisfies the 
following SDE, 
\begin{equation}
\label{eq:bessel}
dD_{t}=d\tilde{W}_{t}+\frac{n-1}{2D_{t}}dt,
\end{equation}
where $\tilde{W}_t$ is a one-dimensional Brownian motion.  Assuming 
$D_0>0$, it is a familiar fact that $D_t$ can hit 0 with positive 
probability iff $n<2$, where fractional values of $n$ are allowed in 
(\ref{eq:bessel}). This question and many more can be answered using ideas 
from potential theory and harmonic functions;  see \cite{mper10} or most other
books in Markov processes.  In one dimension and with 
$F(y)=(n-1)/2$, we see that \eqref{eq:flow} is identical to the Bessel 
equation \eqref{eq:bessel}.

There is an intimate connection between Bessel processes and their 
associated flows, to stochastic Loewner evolution (SLE), also called 
Schramm-Loewner evolution (see Lawler \cite{law05} for some basic facts).   
Indeed, on page x of the preface of \cite{law05}, Lawler states ``With the 
Brownian input, the Loewner equation becomes an equation of Bessel type, 
and much of the analysis of SLE comes from studying such stochastic 
differential equations. For example, the different ``phases'' of SLE 
(simple/non-simple/space-filling) are deduced from properties of the Bessel 
equation''.  In his St. Flour notes \cite{wer04}, Werner states on page 131:
``Then, we see that SLE$_\kappa$ can be interpreted in terms of the flow of a 
complex Bessel process''.  For a further explanation of this point of view, 
see Katori \cite{kat15}.  

Here are a few details.  SLE is usually thought of as a flow in a subset of 
the complex plane, with random parameters.  The equation for chordal SLE 
$g_t(x)$, which takes values in the upper complex half-plane, is
\begin{equation*}
\partial_tg_t(x)=\frac{2}{g_t(x)-\kappa W(t)},
\end{equation*}
where $W(t)$ is a one-dimensional Brownian motion and $\kappa>0$ is a 
parameter.  Setting $g_t(x)=\kappa h_t(x)+\kappa W(t)$, we find
\begin{equation*}
dh_t(x)=\frac{2/\kappa^2}{h_t(x)}dt-dW(t),
\end{equation*}
which is an equation of Bessel type.  

Since the Bessel process and its associated flow play such an important role 
in SLE, we feel that it is of interest to study Bessel flows in higher 
dimensions, and similar flows such as in \eqref{eq:flow}.  Perhaps the most 
basic property of the Bessel process is its probability of hitting the 
origin, and this is the question we investigate in this paper.  

Although such hitting questions are classical and have been completely 
answered for a large class of Markov processes, the reader may be surprised 
to learn that for processes such as $\phi_t(\cdot)$ which take values in a 
function space, the potential theory is more difficult or even intractable.  
In such cases we must fall back on more basic ideas, such as covering 
arguments and comparison with a random walk.  As usual, the critical case is 
the most difficult.  The critical case is where the parameters of the 
process are at or near the boundary between hitting and not hitting.  

Stochastic partial differential equations (SPDE) provide a source of such 
examples, and hitting questions for such equations have been studied in 
\cite{DKN07,DS10,DS15,mt01,NV09} among others.  These papers 
deal with the stochastic heat and wave equations either with no drift or with 
well behaved drift.  Since (\ref{eq:flow}) has singular drift,
the following result might be more relevant to our situation.  Suppose that 
$x$ lies in the unit circle $[0,1]$ with endpoints identified, and that 
$u(t,x)$ satisfies 
\begin{equation}
\label{eq:mueller-pardoux}
\partial_tu=\partial_x^2u+u^{-\alpha}+\dot{W},
\end{equation}
where $\dot{W}=\dot{W}(t,x)$ is two-parameter white noise.  Assume that 
$u(0,x)$ is continuous and strictly greater than 0.  Then $u$ hits 0 with 
positive probability if $\alpha<3$, and hits 0 with probability 0 if 
$\alpha>3$.  We say that $u$ hits 0 if there is a point $(t,x)$ such 
that $u(t,x)=0$.  See \cite{mue97} and \cite{mp99} for details.  
A natural question about (\ref{eq:mueller-pardoux}) is whether white noise 
$\dot{W}$ could be replaced by colored noise.  We can regard (\ref{eq:flow}) 
as a degenerate SPDE which does not have the Laplacian, and where the 
colored noise is independent of $x$, so the noise is at the opposite extreme 
from white noise.  

Returning to our stochastic flow $\phi_t$, we see that for a fixed point 
$x_0\in\mathbb{R}^n$ and for $n\geq2$ there is no chance that $\phi_t(x_0)$ 
can hit 0.  In that situation, even with zero drift $F\equiv0$, as already 
mentioned, $\|\phi_t(x_0)\|$ is a Bessel process with radial drift 
$(n-1)/(2\|\phi\|)$ and $n\geq2$.  Adding extra radial drift with $F\geq0$ 
makes it even less likely for $\phi_t(x_0)$ to hit the origin. However, if 
we allow $x\in\mathcal{A}_0$ to vary, then we may have several chances for 
$\phi_t(x)$ to hit 0. Intuitively, the Brownian motion $W_t$ which drives 
our flow has $n$ independent components, so we may expect that $\phi$ has $n$ 
independent chances to get closer to 0, and so the critical drift should be 
proportional to $n$.  Our first guess might also be that the critical drift 
is proportional to $1/\|\phi\|$, as for the Bessel process.  Here, the 
critical drift is that drift which lies on the boundary between a positive 
probability of hitting 0 and a zero probability of hitting 0.  
Unfortunately, there is no comparison principle for $\|\phi_t(x)\|$, so we 
cannot be sure that increasing the radial drift increases $\|\phi\|$. Thus 
we cannot conclude that larger radial drift leads to a smaller probability 
of $\phi$ hitting 0, and so we cannot give precise meaning to the term 
``critical drift''.

By introducing new processes $\psi_t(x)=\phi_t(x)-W_t$ and $B_t=-W_t$, we may 
rewrite the problem in terms of $\psi_t$ and $B_t$:
\begin{align}
\label{main_equation}
d\psi_{t}(x) &= \frac{F(\|\psi_t(x)-B_t\|)}{\|\psi_{t}(x)-B_{t}\|}\mathbf{u}(\psi_{t}(x)-B_{t})dt, \quad 
\textrm{for } 0\leq t<\tau, \\
\psi_0(x) &= x \in \mathcal A_{0}.  \notag
\end{align}
Note that $B_t$ is a standard $n$-dimensional Brownian motion.  
We denote 
\begin{equation*}
\mathcal A_{t} = \{\psi_{t}(x): \ x\in \mathcal A_{0} \},
\end{equation*}
which for any $t<\tau$ is a subset of $\mathbb{R}^{n}$. 

Now we are ready to state our main results. Recall that 
$F:\mathbb{R}^n\to\mathbb{R}$ is a nonnegative Lipschitz function.   
In our first theorem, we prove that for $F$ bounded below by a large
enough constant, the Brownian motion $B_t$ does not hit the 
region $\mathcal{A}_t$. Moreover, the distance between $\mathcal{A}_t$ and $B_t$ tends to $\infty$ as $t\to\infty$.

\begin{theorem}  
\label{thm_not_hitting}  
There exists $C^{*}>0$ not depending on $n$ such that for all $n\geq1$
the following holds.  Suppose the set $\mathcal A_{0} \subset \mathbb{R}^n$ 
has a positive distance from the origin.  If $F(x)\geq C^{*}n$ for all 
$x\in\mathbb{R}^n$,  then we have 
\begin{equation*}
P(G_{\mathcal{A}_0,F,n}) =0. 
\end{equation*}
\end{theorem}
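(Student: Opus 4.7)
The plan is to recast the problem in terms of the radial distance $R_t(x):=\|\psi_t(x)-B_t\|=\|\phi_t(x)\|$, so that the event $G_{\mathcal{A}_0,F,n}$ coincides with $\{\inf_{x\in\mathcal{A}_0,\,t\geq 0} R_t(x)=0\}$. Setting $Y_t(x):=\psi_t(x)-B_t$, a direct application of It\^o's formula to $\|Y_t(x)\|$ gives the Bessel-type equation
\begin{equation*}
dR_t(x)=\frac{F(R_t(x))+(n-1)/2}{R_t(x)}\,dt+d\tilde{W}_t^{(x)},
\end{equation*}
where $\tilde{W}_t^{(x)}:=-\int_0^t \mathbf{u}(Y_s(x))\cdot dB_s$ is a one-dimensional Brownian motion. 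For each fixed $x$, this is a generalised Bessel process whose effective dimension is $n+2F(R_t(x))$, which is comparable to $(2C^*+1)n$ under the hypothesis $F\geq C^*n$.

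For a single starting point $x$, I would produce a negative-moment supermartingale. For any $p>0$, It\^o's formula on $\{R_t(x)>0\}$ yields
\begin{equation*}
dR_t(x)^{-p}=-p\Bigl[F(R_t(x))+\tfrac{n-1}{2}-\tfrac{p+1}{2}\Bigr]R_t(x)^{-p-2}\,dt-pR_t(x)^{-p-1}\,d\tilde{W}_t^{(x)}.
\end{equation*}
Choosing $p$ of order $n$ (say $p=2C^*n$) and making $C^*$ large enough forces the bracket to be nonnegative, so that $R_t(x)^{-p}$ is a nonnegative local supermartingale. Standard localisation then yields $\mathbb{E}[R_t(x)^{-p}]\leq R_0(x)^{-p}$, giving the desired non-hitting statement for each fixed initial point.

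The main obstacle is upgrading this pointwise conclusion to a statement uniform in $x\in\mathcal{A}_0$, since the set of initial conditions may be uncountable. My plan is a Kolmogorov-chaining argument on the random field $(t,x)\mapsto R_t(x)^{-p}$: establish a joint moment bound of the form
\begin{equation*}
\mathbb{E}\Bigl[|R_t(x)-R_t(y)|^{q}\bigl(R_t(x)\wedge R_t(y)\bigr)^{-p-q}\Bigr]\leq C_t|x-y|^{\beta}
\end{equation*}
with $\beta$ larger than the ambient dimension of $\mathcal{A}_0$, by combining Burkholder--Davis--Gundy estimates for the martingale parts with the supermartingale bound above and a Gronwall-type comparison of flow lines started from $x$ and $y$. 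Chaining, together with a Doob inequality to pass from a fixed time to the running supremum in $t$, then yields $\mathbb{E}[\sup_{x\in\mathcal{A}_0,\,s\leq T}R_s(x)^{-p}]<\infty$, which forces $\inf_{x\in\mathcal{A}_0,\,s\leq T}R_s(x)>0$ almost surely, and a union bound over $T\in\mathbb{N}$ concludes $P(G_{\mathcal{A}_0,F,n})=0$. The hard part is precisely the singular drift: the map $x\mapsto\psi_t(x)$ is not a priori Lipschitz because of the $1/R$ factor in the coefficient, so the Gronwall comparison must be carried out against stopping times of the form $\inf\{s:R_s(x)\leq\varepsilon\}$, with the negative-moment bound of paragraph two used to remove the $\varepsilon$-dependence in the limit.
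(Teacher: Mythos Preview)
Your single-point analysis is correct: for each fixed $x$, the process $R_t(x)^{-p}$ with $p\sim n$ is indeed a nonnegative local supermartingale once $F\geq C^*n$, and this forces $R_t(x)>0$ for all $t$ almost surely.

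The gap is in the uniformisation step. Note first that in the $\psi$-formulation the difference $\psi_t(x)-\psi_t(y)$ solves a \emph{pathwise ODE} with no martingale term, so BDG plays no role; the increment is governed purely by Gronwall. The Lipschitz constant of $z\mapsto F(\|z\|)\mathbf{u}(z)/\|z\|$ at a point of norm $r$ is of order $n/r^2$, so Gronwall gives
\[
\|\psi_t(x)-\psi_t(y)\|\;\le\;|x-y|\,\exp\Bigl(Cn\!\int_0^t \bigl(R_s(x)\wedge R_s(y)\bigr)^{-2}\,ds\Bigr).
\]
Your supermartingale bound controls $\mathbb{E}[R_t^{-p}]$ with $p\sim n$, i.e.\ it yields $P(R_t\le\varepsilon)\lesssim\varepsilon^{cn}$, a \emph{polynomial} estimate in $\varepsilon$. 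But the Gronwall factor is \emph{exponential} in $\int R_s^{-2}\,ds$; stopping at level $\varepsilon$ as you suggest produces a factor $\exp(Cnt/\varepsilon^2)$ which overwhelms $\varepsilon^{cn}$ as $\varepsilon\downarrow 0$. So the proposed trade-off between the negative-moment bound and the stopped Gronwall estimate does not close, and no choice of $C^*$ repairs it. In short, the supermartingale gives you the right pointwise behaviour but not enough quantitative control on the \emph{path} of $R_s$ to tame the singular Lipschitz constant of the flow.

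The paper sidesteps this entirely. Rather than chaining over $x$, it reduces to a single scalar by \emph{enlarging} $\mathcal{A}_0$ to the complement of a ball $\mathbb{A}_0=\mathbf{B}_{\rho_0}(0)^c$ and tracking only $\rho(t)=\mathrm{dist}(B_t,\mathbb{A}_t)$. A sequence of stopping times is introduced at which $\rho$ has either doubled or halved; at each such time the region is re-enlarged to the complement of a ball about the current Brownian position, and a geometric argument (the Brownian exit-time estimate of Lemma~\ref{lemma_1} combined with a lower bound on the radial component of the drift) shows that doubling occurs with conditional probability at least $2/3$ once $F\ge C^*n$. Thus $\log_2\rho_i$ dominates a biased random walk and $\rho_i\to\infty$ a.s. This random-walk/renewal scheme never differentiates the flow in $x$, so the singular Lipschitz constant is simply irrelevant.
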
 

In the next two theorems we prove that when $F$ is bounded from above by a small 
enough constant, the Brownian motion $B_t$ hits the region $\mathcal{A}_t$ 
with positive probability.  Hitting the region with probability one requires 
that the region be rather large, or else the transience of Brownian motion 
in high dimensions works against hitting.  
\begin{theorem}  
\label{theorem-hit}
There exists $c^{*}>0$ not depending on $n$ such that for all $n\geq 1$ the 
following holds.  Suppose 
\begin{equation*}
\mathcal A_{0}= \{(x_{1},...,x_{n}):x_{1}\geq1\text{ and }x_{2},...x_{n}\in\mathbb R\}. 
\end{equation*}
If $0\leq F(x) \leq c^{*}n^{3/4}$ for all  $x\in\mathbb{R}^n$, then
\begin{equation*}
P(G_{\mathcal{A}_0,F,n})=1. 
\end{equation*}
\end{theorem}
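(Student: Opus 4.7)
From the reformulation \eqref{main_equation}, the event $G_{\mathcal{A}_0,F,n}$ is equivalent to $\{B_t \in \mathcal{A}_t$ for some $t \ge 0\}$, where $\mathcal{A}_t := \psi_t(\mathcal{A}_0)$. Under the boundedness and Lipschitz hypotheses on $F$ and the positive-distance condition on $\mathcal{A}_0$, standard stochastic flow theory gives that $x \mapsto \psi_t(x)$ is a homeomorphism of $\mathbb{R}^n$ for $t < \tau$, so $\mathcal{A}_t$ is a topological closed half-space whose boundary is $\psi_t(H)$, with $H := \{x : x_1 = 1\}$. The plan is to prove a quantitative estimate showing that $\mathcal{A}_t$ deforms only slowly from the initial half-space, and then to use the recurrence of the one-dimensional Brownian motion $B_t^{(1)}$ to enter it.

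The key technical step is a deformation estimate. Set $Z_t(x) := \psi_t(x) - B_t$ and $\rho_t(x) := \|Z_t(x)\|$. An It\^o calculation analogous to \eqref{eq:bessel} shows that $\rho_t(x)$ is a Bessel-type diffusion of effective dimension $n + 2F(\rho_t(x)) \le n + 2c^* n^{3/4}$, which stochastically dominates an ordinary Bessel process of dimension $n$ started from $\|x\|$. Concentration for the latter yields $\rho_s(x) \gtrsim \sqrt{ns}$ on a high-probability event, uniformly over $x \in H$ and $s$ in a chosen window $[t_0, T]$. Since \eqref{main_equation} gives $\|\psi_t(x) - x\| \le \int_0^t F(\rho_s(x))/\rho_s(x)\,ds$, substitution yields
\[
\sup_{x \in H} \|\psi_t(x) - x\| \;\lesssim\; \frac{c^* n^{3/4}}{\sqrt n}\,\sqrt t \;=\; c^* n^{1/4}\sqrt t,
\]
strictly smaller than the Brownian spread $\sqrt{nt}$ because $1/4 < 1/2$; this explains the $n^{3/4}$ threshold, as any drift of order $n$ would saturate the estimate. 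Consequently, with high probability $\mathcal{A}_T \supseteq \{x_1 \ge 1 + Cn^{1/4}\sqrt T\}$, and since $\{B_T^{(1)} \ge 1 + Cn^{1/4}\sqrt T\}$ has positive probability, one obtains $P(G_{\mathcal{A}_0,F,n}) > 0$.

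To upgrade from positive probability to probability one, I apply the strong Markov property at times $kT$, $k \in \mathbb{N}$. Conditional on not having hit by time $kT$, the shifted data $(\mathcal{A}_{kT} - B_{kT}, 0)$ still satisfies the hypotheses of the theorem (a topological half-space at positive distance from the origin), so the same estimates furnish a uniform lower bound on the conditional hitting probability in each $[kT,(k+1)T]$, and a conditional Borel--Cantelli argument yields $P(G_{\mathcal{A}_0,F,n}) = 1$. The principal obstacle is the uniform-in-$x$ lower bound on $\rho_s(x)$ in the deformation step: because the drift is singular at $\rho = 0$, an isolated close approach of some $Z_s(x)$ to the origin could blow up the integrand, and ruling this out uniformly over the $(n-1)$-dimensional parameter set $H$ requires an exit-time analysis for the Bessel comparison together with a covering/chaining argument over $H$. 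A subtler issue arises in the restart: the iterated set $\mathcal{A}_{kT} - B_{kT}$ is only topologically (not affinely) a half-space, so the deformation estimate must be formulated in a flow-invariant form that depends only on a suitable transverse thickness of the initial set rather than on its affine structure.
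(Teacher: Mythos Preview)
Your proposal contains a genuine gap at its central step. The uniform lower bound $\rho_s(x)\gtrsim\sqrt{ns}$ over all $x\in H$ is not merely hard to prove---it is false. Recall that $\rho_s(x)=\|\phi_s(x)\|=\|\psi_s(x)-B_s\|$, so $\inf_{x\in H}\rho_s(x)=\mathrm{dist}(B_s,\partial\mathcal{A}_s)$. With $F\equiv 0$ this distance is exactly $|1-B^{(1)}_s|$, attained at $x_s=(1,B_s^\perp)\in H$, which is $O(\sqrt{s})$ rather than $O(\sqrt{ns})$; adding a nonnegative radial drift does not change this order. Concretely, for that particular $x_s$ the lateral component of $\phi_s(x_s)$ essentially vanishes, so $\rho_s(x_s)$ is governed by the one-dimensional vertical coordinate and cannot see the $\sqrt{n}$ gain coming from the $(n-1)$ transverse directions. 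No covering or chaining argument over $H$ will recover a bound that is simply not true; your acknowledged ``principal obstacle'' is in fact an obstruction. Note also that your two claims are in tension: if $\inf_{x\in H}\rho_s(x)\gtrsim\sqrt{ns}$ held on $[t_0,T]$, then $B$ would be far from $\partial\mathcal{A}_s$ throughout that window, which works against the hitting you want in the same window.

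The paper reaches the same target quantity you identify---a uniform bound on the $x_1$-displacement $\sup_{x\in H}(\psi_t(x)-x)_1$---but by a completely different mechanism. Rather than bounding $\rho_s(x)$ pointwise in $s$, it controls the \emph{time integral} $\int_0^{\tau_1}\rho_s(x)^{-1}\,ds$ directly. The observation is that for a point $p$ with vertical separation in $[1/2,2]$, the drift on $p$ is large only when $B_s^\perp$ is near $p^\perp$; since $B^\perp$ is an $(n-1)$-dimensional Brownian motion, its occupation time in any ball of radius $e^k$ is $O(e^{2k}/n)$ uniformly (Proposition~\ref{prop-L-bound} plus a covering of the Brownian path by $O(n e^{-2k})$ such balls, Proposition~\ref{prop-cover-bm}). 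Summing the drift contributions over dyadic annuli in $p^\perp-B^\perp$ then yields a total vertical drift $\lesssim c^*$. This is where the $n^{3/4}$ threshold actually enters: it balances the occupation-time bound against the number of balls the point $p^\perp$ itself may traverse laterally. Your heuristic that $n^{3/4}/\sqrt{n}=n^{1/4}\ll n^{1/2}$ ``explains'' the exponent is not the operative mechanism.

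Finally, the paper avoids your restart difficulty by never iterating on the deformed set: at each stopping time $\tau_i$ it replaces $\mathcal{A}_{\tau_i}$ by a genuine affine half-space $\mathbb{A}_{\tau_i}\subset\mathcal{A}_{\tau_i}$ and runs the estimate afresh, turning $\log_2\rho_i$ into an honest biased random walk.
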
 
If we are willing to accept a positive probability of $\{\phi_t(x): x\in\mathcal{A}_0\}$ hitting 0, we can extend Theorem \ref{theorem-hit} to a broad class of initial sets $\mathcal{A}_0$.  If we make the set $\mathcal{A}_0$ smaller, then the set of $\omega$ for which $\{\phi_t(x): x\in\mathcal{A}_0\}$ hits 0 will also become smaller.   Theorem \ref{theorem-hit2} shows that if $\mathcal{A}_0$ contains an open ball, then the probability of the above set hitting 0 is still positive.\begin{theorem}  
\label{theorem-hit2}
\label{theorem-hit-with-general-initial-set}
There exists $c^{*}>0$ not depending on $n$ such that for all $n\geq 1$ the 
following holds.  Suppose 
$\mathcal A_{0}\subset\mathbb{R}^n$ has nonempty interior.  
If $0\leq F(x) \leq c^{*}n^{3/4}$ for all  $x\in\mathbb{R}^n$, then
\begin{equation*}
P(G_{\mathcal{A}_0,F,n})>0. 
\end{equation*}
\end{theorem}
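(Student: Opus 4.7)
The plan is to reduce Theorem \ref{theorem-hit2} to Theorem \ref{theorem-hit} by combining the Stroock--Varadhan support theorem with the Markov property of the driving Brownian motion. Since $\mathcal{A}_0$ has nonempty interior, it contains a closed ball $B=\overline{B(x_0,r)}$ with $x_0\neq 0$ and $r>0$, and clearly $G_{B,F,n}\subseteq G_{\mathcal{A}_0,F,n}$, so it suffices to show $P(G_{B,F,n})>0$.

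First I would extract a compact starting set with positive hitting probability. With $H=\{x_1\geq 1\}$ and $H_R=H\cap\overline{B(0,R)}$, the events $G_{H_R,F,n}$ are increasing in $R$ with union $G_{H,F,n}$, which has probability $1$ by Theorem \ref{theorem-hit}; monotone continuity of probability therefore yields some $R_0>0$ with $P(G_{H_{R_0},F,n})\geq 1/2$.

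The core step is to produce a time $t_0>0$ and an event $E\in\mathcal{F}_{t_0}$ with $P(E)>0$ on which the flow from $B$ is well-defined on $[0,t_0]$ and $\phi_{t_0}(B)\supset H_{R_0}$. For each smooth $h:[0,t_0]\to\mathbb{R}^n$ with $h(0)=0$, I consider the skeleton ODE obtained from \eqref{eq:flow} by replacing $dW_t$ with $dh_t$; since $\dot h$ enters additively and is otherwise unconstrained, it acts as a full control on $\dot\phi^h=b(\phi^h)+\dot h$, where $b$ is the singular radial drift. I would use a controllability argument to design $h$ in stages---first transport $B$ to a convenient region away from $0$, then exploit the outward radial expansion of $b$ together with corrective motion from $\dot h$---so that $\phi^h$ stays at positive distance from the origin on $[0,t_0]$ and $\phi^h_{t_0}(B)$ is a strict open superset of $H_{R_0}$. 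This controllability construction, which must show that a small ball can be expanded to cover a prescribed compact set in finite deterministic time while avoiding the origin, is the main technical obstacle. Once such an $h$ is fixed, the Stroock--Varadhan support theorem (valid here because the drift is Lipschitz on compacta disjoint from $0$ and the diffusion is the identity) together with continuous dependence of the flow on its driving path yields $\{\|W-h\|_{\infty,[0,t_0]}<\varepsilon\}\subseteq E$ for all sufficiently small $\varepsilon>0$, giving $P(E)>0$.

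To conclude I would apply the Markov property at $t_0$: the shift $W'_s=W_{t_0+s}-W_{t_0}$ is a standard $n$-dimensional Brownian motion independent of $\mathcal{F}_{t_0}$, and the subsequent flow $\phi'$ driven by $W'$ has the same law as $\phi$. Hence on $E$, conditionally on $\mathcal{F}_{t_0}$, the probability that $\phi'$ carries some point of $\phi_{t_0}(B)\supset H_{R_0}$ to the origin is at least $P(G_{H_{R_0},F,n})\geq 1/2$. Therefore
\begin{equation*}
P(G_{\mathcal{A}_0,F,n})\;\geq\;P(G_{B,F,n})\;\geq\;P(E)\cdot\tfrac12\;>\;0.
\end{equation*}
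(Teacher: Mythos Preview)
The controllability step has a genuine gap. The control $\dot h$ enters additively and is the \emph{same} for every initial point $x$, so it acts on the skeleton flow only as a rigid translation: one has
\[
\phi^h_t(x)-\phi^h_t(y) \;=\; (x-y)+\int_0^t\bigl[b(\phi^h_s(x))-b(\phi^h_s(y))\bigr]\,ds,
\]
which is independent of $h$ except through the positions of the points. Any genuine expansion of the ball $B$ must therefore come from the drift $b$ itself. But the hypothesis of the theorem allows $F\equiv 0$, in which case $b\equiv 0$, the skeleton flow reduces to $\phi^h_t(x)=x+h(t)$, and $\phi^h_{t_0}(B)$ is always a translate of $B$ with the original radius $r$; it can never contain $H_{R_0}$ once $R_0>r$. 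So your construction of the event $E$ fails precisely in an admissible case. Even when $F>0$, the claim is delicate: you insist that $\phi^h$ stay a positive distance from the origin, which bounds the Jacobian of $b$ and hence the possible expansion rate, and the radial structure of $b$ can compress the ball in the radial direction while expanding it tangentially. The student correctly flags this step as ``the main technical obstacle,'' but as written it cannot be carried out in general.

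The paper proceeds quite differently and avoids the expansion issue altogether. Rather than enlarging the initial ball via the flow, it lets the driving Brownian motion approach, in a short time and with positive probability, a flat $(n-1)$-dimensional disc contained in $\mathcal{A}_0$; because the elapsed time is small, the disc has essentially not moved. Then it applies the scaling Lemma~\ref{lem:scaling} so that in rescaled coordinates the disc becomes $\{1\}\times\mathbf{B}^{(n-1)}_m(0)$ with $m$ arbitrarily large. Since scaling replaces $F$ by $F(\cdot/\lambda)$ (same bounds, different function), the paper needs the hitting probability from such discs to be bounded below \emph{uniformly} over all admissible $F$; this is Lemma~\ref{lem:uniform-in-F}, proved by revisiting the argument of Theorem~\ref{theorem-hit}. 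Your compactness step extracting $H_{R_0}$ with $P(G_{H_{R_0},F,n})\ge 1/2$ is correct for a fixed $F$, and the Markov restart is fine, but the mechanism you propose to reach that configuration does not work.
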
 

Roughly speaking, our theorems say that the critical drift (if there is one) 
for (\ref{eq:flow}) has magnitude between $c^*n^{\frac{3}{4}}/\|\phi\|$ and
$C^*n/\|\phi\|$.  

We also note that in the case $n=1$, our question reduces to finding the 
critical parameter for a Bessel process to hit the origin.  We leave the
details of this case to the reader.

\section{Precise setup of the flow}
\label{section:setup}

A good reference for the general theory of stochastic flows is the book of 
Kunita \cite{kun90}.  However, since our flow is driven by a single Brownian 
motion, we can make use of properties of Brownian motion to simplify our 
setup.  We also need to keep in mind that the flow should only exist up to 
time $\tau$ which was intuitively defined as the first time $t$ that 
$\phi_t(x)=0$ for some $x\in\mathcal{A}_0$.  

Our strategy is to truncate the drift and take the limit as the truncation 
level tends to infinity.  Let
\begin{equation}
\label{eq:def-f-N}
f_N(x)=\frac{F(\|x\|)}{\|x\|\vee (1/N)}\mathbf{u}(x),
\qquad
f_\infty(x)=\frac{F(\|x\|)}{\|x\|}\mathbf{u}(x),
\end{equation}
where $\vee$ denotes the maximum.  Observe that $f_N$ is a globally 
Lipschitz function.  

We start by defining the translated flow \eqref{main_equation}, 
with respect to the truncated drift $f_N$.  Let $\psi^{(N)}_t(x)$ satisfy
\begin{align}
\label{main_equation_truncated}
d\psi^{(N)}_{t}(x) &= f_N\big(\psi^{(N)}_{t}(x)-B_{t}\big)dt, \quad 
\textrm{for } t\geq0, \\
\psi^{(N)}_0(x) &= x \in\mathbb{R}^n  \notag
\end{align}
with $B_t=-W_t$.  Then \eqref{main_equation_truncated} is an ordinary 
differential equation and $f_N\big(\psi^{(N)}_{t}(x)-B_{t}\big)$ is 
a (random) Lipschitz function of $\psi^{(N)}_t(x)$.  It follows that 
\eqref{main_equation_truncated} has a unique solution which is 
$\mathcal{F}_t$-adapted, where we recall that $\mathcal{F}_t$ is the 
Brownian filtration.  

Given a set $\mathcal{A}\subset\mathbb{R}^n$, let 
\begin{equation*}
\tau^{(N)}(\mathcal{A})
=\inf\Big\{t\geq0: \inf_{x\in\mathcal{A}}|\psi^{(N)}_t(x)-B_t|\leq1/N\Big\}
\end{equation*}
and let $\tau^{(N)}(\mathcal{A})=\infty$ if the above set is empty.  
It follows that for $x\in\mathcal{A}$ and 
$0\leq t\leq\tau^{(N)}(\mathcal{A})$, we have
\begin{align}
\label{eq:psi-1}
d\psi^{(N)}_{t}(x) &= f_N\big(\psi^{(N)}_{t}(x)-B_t\big)dt  \\
\psi^{(N)}_0(x) &=  x \in\mathbb{R}^n  \notag
\end{align}
and that $f_N(\psi^{(N)}_t(x))=f_\infty(\psi^{(N)}_t(x))$ for 
$0\leq t\leq\tau^{(N)}({\mathcal{A}})$, almost surely.  Again almost 
surely, we have
$0\leq\tau^{(1)}(\mathcal{A})\leq\tau^{(2)}(\mathcal{A})\leq\cdots$.  
Now we give the rigorous definition of 
$\tau(\mathcal{A})$ , namely
\begin{equation*}
\tau(\mathcal{A})=\lim_{N\to\infty}\tau^{(N)}(\mathcal{A}).
\end{equation*}
Let $\tau(\mathcal{A})=\infty$ on the exceptional set where 
$\tau^{(N)}(\mathcal{A})$ is not nondecreasing in $N$.  We note that 
$\tau(\mathcal{A})=\infty$ is possible even off of this exceptional set.  Now 
we can define $\psi^{\mathcal{A}}$ as follows for $0\leq t<\tau(\mathcal{A})$ 
and $x\in\mathcal{A}$
\begin{equation*}
\psi^{\mathcal{A}}_t(x) = \lim_{N\to\infty}\psi^{(N)}_t(x).
\end{equation*}
We see that for $0\leq t<\tau(\mathcal{A})$ and for $x\in\mathcal{A}$, we have
\begin{align}
\label{eq:psi-2}
d\psi^\mathcal{A}_{t}(x) &= f\big(\psi^{\mathcal{A}}_{t}(x)-B_t\big)dt  \\
\psi^{\mathcal{A}}_0(x) &= x \in \mathbb{R}^n. \notag
\end{align}
In order to define $\psi^\mathcal{A}_t$ for all time, we introduce a 
cemetery state $\Delta$ and let
\begin{equation*}
\psi^\mathcal{A}_t=\Delta
\end{equation*} 
for $t\geq\tau(\mathcal{A})$.  Finally, as in the introduction, we define
\begin{equation*}
\mathcal A_{t} = \{\psi_{t}(x): \ x\in \mathcal A \}.
\end{equation*}
  
Our next goal is to formulate a strong Markov property for 
$\psi^{\mathcal{A}}$.  In fact $\psi^{\mathcal{A}}$ 
is a nonanticipating measurable function of the Brownian path $B_t$, so we can 
define the shift operator $\theta_t$ for $B_t$ and extend it to 
$\psi^{\mathcal{A}}$.   

\begin{lemma}
\label{lem:strong-Markov}
Let $a\subset\mathbb{R}^n$ and suppose that $\sigma$ is a stopping time 
with respect to the filtration $(\mathcal{F}_t)_{t>0}$ generated by Brownian 
motion $B_t$.  Assume that $\sigma<\tau(\mathcal{A})$ almost surely.  Then 
conditioned on $\mathcal{F}_\sigma$, we have that 
$\psi_t(x):=(\theta_\sigma\psi^{\mathcal{A}})_t(x)$ satisfies the following 
modified version of \eqref{eq:psi-2}.  For 
$0\leq t<\theta_\sigma\tau(\mathcal{A})=\tau(\mathcal{A}_\sigma)$ and 
$x\in\mathcal{A}_\sigma$,
\begin{align}
\label{eq:psi-3}
d\psi_{t}(x) &= f\big(\psi_{t}(x)-(\theta_\sigma B)_t\big)dt  \\
\psi_0(x) &= x.     \notag
\end{align}
\end{lemma}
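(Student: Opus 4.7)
The plan is to exploit the pathwise, ODE--based construction of $\psi^{\mathcal{A}}$ from Section~\ref{section:setup} and piggyback on the standard strong Markov property of the driving Brownian motion $B$. First I would prove the analogous statement for the truncated flow $\psi^{(N)}$, where the coefficient $f_N$ is globally Lipschitz, and then pass to the monotone limit $N\to\infty$ using $\tau^{(N)}\uparrow\tau$.

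At the truncated level, equation \eqref{main_equation_truncated} is a random ODE with globally Lipschitz right-hand side, so pathwise uniqueness produces a deterministic measurable functional $\Phi^{(N)}$ with $\psi^{(N)}_t(x)=\Phi^{(N)}_t(x,B|_{[0,t]})$, which is itself a flow in the variable $x$. For every fixed $\omega$ and every $s\geq 0$, uniqueness forces the pathwise shift identity $\psi^{(N)}_{s+t}(x)=\Phi^{(N)}_t\bigl(\psi^{(N)}_s(x),(\theta_s B)|_{[0,t]}\bigr)$ for all $t\geq 0$. Applying the strong Markov property of $B$ at the stopping time $\sigma$, the shifted path $\theta_\sigma B$ is, conditional on $\mathcal{F}_\sigma$, a Brownian motion independent of $\mathcal{F}_\sigma$. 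Since $\psi^{(N)}_\sigma(x)$ is $\mathcal{F}_\sigma$-measurable, we conclude that $(\theta_\sigma\psi^{(N)})_t(x)$ solves \eqref{eq:psi-1} with driver $\theta_\sigma B$ and initial conditions in $\psi^{(N)}_\sigma(\mathcal{A})$, and the truncated hitting time satisfies $\theta_\sigma\tau^{(N)}(\mathcal{A})=\tau^{(N)}\bigl(\psi^{(N)}_\sigma(\mathcal{A})\bigr)$.

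Finally I would pass to the limit $N\to\infty$. By hypothesis $\sigma<\tau(\mathcal{A})$ almost surely, and since $\tau^{(N)}(\mathcal{A})\uparrow\tau(\mathcal{A})$, for a.e.\ $\omega$ eventually $\sigma<\tau^{(N)}(\mathcal{A})$, which forces $\psi^{(N)}_\sigma(x)=\psi^{\mathcal{A}}_\sigma(x)$ and hence $\psi^{(N)}_\sigma(\mathcal{A})=\mathcal{A}_\sigma$ for all large $N$. Taking the monotone limit in the truncated identity then yields both $\theta_\sigma\tau(\mathcal{A})=\tau(\mathcal{A}_\sigma)$ and the announced equation \eqref{eq:psi-3} on $[0,\tau(\mathcal{A}_\sigma))$ with $x\in\mathcal{A}_\sigma$.

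The main obstacle is the bookkeeping in this last limit, namely checking that $\mathcal{A}_\sigma$ genuinely lies in $\mathbb{R}^n$ (rather than at the cemetery state $\Delta$) and keeps a strictly positive distance from $B_\sigma$, so that the restarted Cauchy problem is well posed and so that the approximating quantities $\tau^{(N)}(\psi^{(N)}_\sigma(\mathcal{A}))$ eventually agree with $\tau^{(N)}(\mathcal{A}_\sigma)$. Both follow from $\sigma<\tau(\mathcal{A})$ together with the definition of $\tau(\mathcal{A})$ as the monotone limit of $\tau^{(N)}(\mathcal{A})$; past that point the argument is essentially a measurability check combined with pathwise uniqueness.
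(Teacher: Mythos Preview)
Your argument is correct and is exactly in the spirit of the paper's proof, which consists of a single sentence: ``This lemma follows immediately from the strong Markov property of Brownian motion.'' You have simply filled in the details the authors omit---the pathwise shift identity for the truncated ODE via uniqueness, followed by the monotone limit $\tau^{(N)}\uparrow\tau$---so there is nothing to correct or contrast.
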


This lemma follows immediately from the strong Markov property of 
Brownian motion.  

For $x\in\mathcal{A}$ and $t<\tau(\mathcal{A})$, we can also define
\begin{equation*}
\phi^\mathcal{A}_t(x)=\psi^{\mathcal{A}}_t(x)-B_t
\end{equation*}
and let $\phi^\mathcal{A}_t(x)=\Delta$ for $t\geq\tau_\mathcal{A}$.  From 
\eqref{eq:psi-3} we see that $\phi^\mathcal{A}_t(x)$ satisfies 
\eqref{eq:flow} for $x\in\mathcal{A}$ and $t<\tau_\mathcal{A}$.  
Also, the strong Markov property for $\phi^\mathcal{A}_t(x)$ follows from 
Lemma \ref{lem:strong-Markov}, and the formulation is similar.  

We will often use the following containment property, which follows by the 
definition of a flow.  If $S_1\subset S_2\subset\mathcal{A}_0$ and 
$t\in[0,\tau_{\mathcal{A}_0})$, then we have that with probability 1,
\begin{align}
\label{eq:subset-property}
\phi_t(S_1)\subset\phi_t(S_2)
\qquad \text{and} \qquad \psi_t(S_1)\subset\psi_t(S_2).
\end{align}

Finally, we record a scaling property of (\ref{main_equation}) which we will 
use repeatedly.  Since this kind of scaling is standard, we leave the proof 
to the reader.  
\begin{lemma}
\label{lem:scaling}
Let $(\psi_t,B_t,\mathcal{A}_0,\tau)$ be a solution to (\ref{main_equation}).  Then for $\lambda > 0$, 
\begin{equation*}
(\lambda \psi_{\lambda^{-2}t},\lambda B_{\lambda^{-2}t},\lambda\mathcal{A}_0,\lambda^{-2}\tau)
\end{equation*}
is also a solution to (\ref{main_equation}), but with $F(x)$ replaced by 
$F(x/\lambda)$.
\end{lemma}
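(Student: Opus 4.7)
The plan is to define the candidate scaled solution explicitly, verify the initial condition, compute its time derivative via the chain rule, and read off the transformed SDE. Concretely, for $\lambda>0$ I set
\begin{equation*}
\tilde{\psi}_s(\tilde{x}):=\lambda\,\psi_{\lambda^{-2}s}(\tilde{x}/\lambda),\qquad \tilde{B}_s:=\lambda\,B_{\lambda^{-2}s},\qquad \tilde{\mathcal{A}}_0:=\lambda\mathcal{A}_0,
\end{equation*}
so that $\tilde{x}\in\tilde{\mathcal{A}}_0$ corresponds bijectively to $x=\tilde{x}/\lambda\in\mathcal{A}_0$. First, I would note that $\tilde{B}_s$ is a standard $n$-dimensional Brownian motion by the usual Brownian scaling, and that the filtration $\tilde{\mathcal{F}}_s:=\mathcal{F}_{\lambda^{-2}s}$ (with respect to which $\tilde{\psi}_s$ is manifestly adapted) is the Brownian filtration of $\tilde{B}$. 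The initial condition is immediate: $\tilde{\psi}_0(\tilde{x})=\lambda\psi_0(\tilde{x}/\lambda)=\lambda\cdot(\tilde{x}/\lambda)=\tilde{x}$.

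Next, for $s$ in the appropriate range, the chain rule gives
\begin{equation*}
\frac{d}{ds}\tilde{\psi}_s(\tilde{x})=\lambda\cdot\lambda^{-2}\cdot\frac{d\psi_u(\tilde{x}/\lambda)}{du}\bigg|_{u=\lambda^{-2}s}=\lambda^{-1}\cdot\frac{F(\|\psi_u(\tilde{x}/\lambda)-B_u\|)}{\|\psi_u(\tilde{x}/\lambda)-B_u\|}\,\mathbf{u}\bigl(\psi_u(\tilde{x}/\lambda)-B_u\bigr),
\end{equation*}
evaluated at $u=\lambda^{-2}s$, using the ODE (\ref{main_equation}) pathwise. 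The crucial observation is the identity $\psi_u(\tilde{x}/\lambda)-B_u=\lambda^{-1}(\tilde{\psi}_s(\tilde{x})-\tilde{B}_s)$, which, combined with $\|\lambda^{-1}v\|=\lambda^{-1}\|v\|$ and $\mathbf{u}(\lambda^{-1}v)=\mathbf{u}(v)$ for any $v\ne0$, turns the right-hand side into
\begin{equation*}
\lambda^{-1}\cdot\frac{F(\lambda^{-1}\|\tilde{\psi}_s-\tilde{B}_s\|)}{\lambda^{-1}\|\tilde{\psi}_s-\tilde{B}_s\|}\,\mathbf{u}(\tilde{\psi}_s-\tilde{B}_s)=\frac{F(\|\tilde{\psi}_s-\tilde{B}_s\|/\lambda)}{\|\tilde{\psi}_s-\tilde{B}_s\|}\,\mathbf{u}(\tilde{\psi}_s-\tilde{B}_s).
\end{equation*}
This is exactly (\ref{main_equation}) with $F(\cdot)$ replaced by $F(\cdot/\lambda)$, as required.

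For the stopping time, the same identity $\tilde{\psi}_s(\tilde{x})-\tilde{B}_s=\lambda(\psi_{\lambda^{-2}s}(x)-B_{\lambda^{-2}s})$ shows that $\tilde{\psi}$ collides with $\tilde{B}$ at some $\tilde{x}\in\tilde{\mathcal{A}}_0$ at new time $s$ if and only if $\psi$ collides with $B$ at $x=\tilde{x}/\lambda\in\mathcal{A}_0$ at old time $\lambda^{-2}s$; the scaling of $\tau$ in the statement follows by matching times. There is no real obstacle here: the argument is essentially bookkeeping, with the only mildly delicate point being to track that the transformation of the integrand produces precisely the factor $\lambda^{-1}$ outside the drift which is then absorbed into the rescaled $F$. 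I would also briefly remark that since (\ref{main_equation}) is, pathwise in $B$, a classical ODE with locally Lipschitz drift away from $\{\psi=B\}$, the change of variables is justified on the open time set $[0,\tilde{\tau})$ without further measurability issues.
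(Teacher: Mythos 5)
Your verification is correct and is exactly the routine chain-rule/Brownian-scaling computation the paper has in mind when it says the proof is standard and left to the reader. One caveat: your own time-matching (new time $s$ corresponds to old time $\lambda^{-2}s$) gives the rescaled hitting time as $\lambda^{2}\tau$, not $\lambda^{-2}\tau$, so the fourth entry in the lemma as printed is evidently a typo; your closing phrase ``follows by matching times'' silently endorses the printed exponent rather than flagging that the correct transformed stopping time is $\lambda^{2}\tau$.
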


Note that this scaling does not change any lower or upper bounds on $F$.

\section{Proof of Theorem~\ref{thm_not_hitting}}
\label{sec:not-hit}

Throughout this section we assume $n\geq2$;  as mentioned at the end of 
Section \ref{section:introduction}, the case $n=1$ reduces to an easy 
question about the Bessel process.  

Here is an outline of our strategy.  We will show that if there exists a 
constant $C^*$ such that if $F(x)>C^*n$ for all $x\in\mathbb{R}^n$ then the 
following holds.  We construct a sequence of stopping times 
$\tau_0\leq\tau_1\leq\cdots$ such that $\lim_{i\to\infty}\tau_i=\infty$ 
almost surely, and such that the distance from $B_{\tau_i}$ to 
$\mathcal{A}_{\tau_i}$ tends to get larger and larger.  Furthermore, at 
times $\tau_i$ we enlarge the region $\mathcal{A}_{\tau_i}$, and call the 
enlarged region $\mathbb{A}_{\tau_i}$.  Then we show that with probability 
1, the Brownian motion $B_t$ never hits the enlarged region $\mathbb{A}_t$.  
So by (\ref{eq:subset-property}) and the strong Markov property, $B_t$ 
cannot hit the original region.  We write $\rho_{i}$ for the distance from 
$B_{\tau_i}$ to our enlarged region $\mathbb{A}_{\tau_i}$.  This would prove 
Theorem \ref{thm_not_hitting}.

Now we give the details.  To start with, let $\tau_0=0$.  Considering that 
the initial domain $\mathcal{A}_0$ of the flow is of positive distance 
$\rho_0>0$ from the origin, we define $\mathbb{A}_0=\mathbf{B}_{\rho_0}(0)^c$, 
that is, the complement of the ball of radius $\rho_0$ centered at the 
origin.  Then $\mathcal{A}_0\subset\mathbb{A}_0$.  We provisionally define 
$\tilde{\mathbb{A}}_t=\psi_t^{\mathbb{A}_0}(\mathbb{A}_0)$ for $t<\tau(\mathbb{A}_0)$, 
where we recall that $\tau(\mathbb{A}_0)$ is the stopping time $\tau$ 
defined with respect to the initial set $\mathbb{A}_0$.  Next, let 
$\tilde{\rho}(t)$ be the shortest distance from $B_t$ to 
$\tilde{\mathbb{A}}_t$.  Let $\tau_1$ be the smallest time $t\in[0,\tau(\mathbb{A}_0))$ 
such that $\tilde{\rho}(t)$ equals either $\rho_0/2$ or $2\rho_0$.  If there 
is no such time $t$, then $B_t$ never hits $\tilde{\mathbb{A}}_t$, and 
Theorem \ref{thm_not_hitting} is proved.  So we assume that there is such a 
time $\tau_1$.  Finally, let $\rho_1=\tilde{\rho}(\tau_1)$.  

For $0=\tau_0\leq t<\tau_1$, define 
$\mathbb{A}_t=\tilde{\mathbb{A}}_t=\psi_t^{\mathbb{A}_0}(\mathbb{A}_{\tau_0})$.  Next, define
\begin{equation*}
\mathbb{A}_{\tau_1}=\mathbf{B}_{\rho_{1}}(B_{\tau_1})^c.  
\end{equation*}
So at the end of our time stage $[\tau_0,\tau_1)$ we have changed our 
region so it is again the complement of a ball centered at the location of 
the Brownian particle $B_{\tau_1}$ and of radius $\rho_{1}$. 
We illustrate this setup in Figure~\ref{fig_setup}.

\begin{figure}[!h]
	\centering
	\setlength{\unitlength}{0.1\textwidth}
	\begin{picture}(10,6)
	\put(0,0){\includegraphics[width=\textwidth]{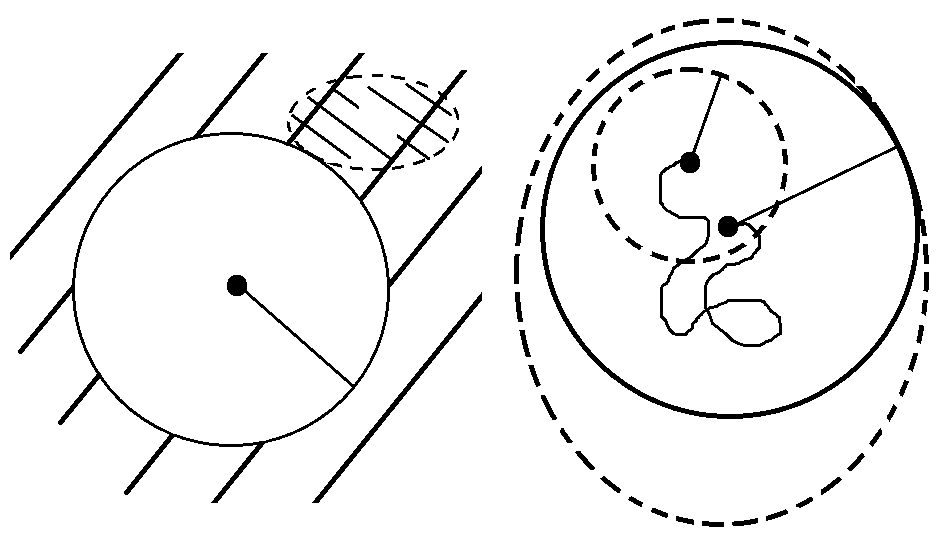}}
	\put(3.8,4.4){$\mathcal{A}_0$}
	\put(1.7,4.6){$\mathbb{A}_0$}
	\put(2.8, 1.8){$\rho_0$}
	\put(7.6, 4.4){$\rho_0$}
	\put(8.5, 3.5){$\rho_1 = 2 \rho_0$}
	\put(6.9, 4.2){$B_0$}
	\put(7.6, 3.55){$B_{\tau_1}$}
	\put(6.4, 1){$\partial\mathbb{A}_{\tau_1}=\partial \mathbf{B}_{\rho_1}(B_{\tau_1})$}
	\put(9.4, 0.8){$\partial\tilde{\mathbb{A}}_{\tau_1}$}
	\end{picture}
	\caption{The initial set $\mathbb{A}_0$ and the case when $\rho_1 = 2\rho_0$.}
	\label{fig_setup}
\end{figure}

Now we repeat the procedure, with time restarted at $\tau_1$ and with 
$B_{\tau_1}$ playing the role of the origin, and proceed inductively.  Here 
we have used the strong Markov property to restart the process.  To 
summarize, at each time $\tau_i$ we change our region to be 
$\mathbb{A}_{\tau_i}=\mathbf{B}_{\rho_{i}}(B_{\tau_i})^c$, the 
complement of the ball of radius $\rho_{i}$ with center $B_{\tau_i}$. 
It follows from our construction that $\rho_{{i}}$ equals either 
$\rho_{i-1}/2$ or $2\rho_{i-1}$. Observe that the following lemma implies Theorem \ref{thm_not_hitting}.
\begin{lemma}
\label{lem:properties-rho-tau}
There exists a constant $C^*>0$ not depending on $n$ such that if 
$F(x)\geq C^*n$ for all $x\in\mathbb{R}^n$, then
\begin{enumerate}
\item[(i)] $\lim_{i\to\infty}\rho_i=\infty$  almost surely. 
\item[(ii)] $\lim_{i\to\infty}\tau_i=\infty$ almost surely.
\end{enumerate}
\end{lemma}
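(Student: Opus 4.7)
The plan is to establish a uniform lower bound $p>1/2$ on the one-step ratio $\zeta_i:=\rho_i/\rho_{i-1}\in\{1/2,2\}$. By the strong Markov property (Lemma~\ref{lem:strong-Markov}) applied at $\tau_{i-1}$ together with the scaling of Lemma~\ref{lem:scaling} used with $\lambda=1/\rho_{i-1}$, the conditional law of $\zeta_i$ given $\mathcal{F}_{\tau_{i-1}}$ reduces to that of $\rho_1/\rho_0$ for a fresh problem with initial distance $1$ and drift $F(\rho_{i-1}\,\cdot\,)$, which still satisfies $F(\rho_{i-1}\,\cdot\,)\geq C^*n$ pointwise by hypothesis. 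Hence $P(\zeta_i=2\mid\mathcal{F}_{\tau_{i-1}})\geq p$ uniformly, for some $p$ depending only on $C^*$ and $n$.

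The crux is to analyze the distance process $\tilde{\rho}(t)=\operatorname{dist}(B_t,\tilde{\mathbb{A}}_t)$ on $[\tau_{i-1},\tau_i)$. Parametrize the initial sphere as $\Theta_{\tau_{i-1}}(\omega)=B_{\tau_{i-1}}+\rho_{i-1}\omega$ for $\omega\in S^{n-1}$, flow it by $\Theta_t(\omega)=\psi^{\mathbb{A}_0}_t(\Theta_{\tau_{i-1}}(\omega))$, and let $\omega^*(t)$ be the argmin of $\omega\mapsto\|B_t-\Theta_t(\omega)\|$. An envelope-theorem argument kills the first-order contribution from $\omega^*(t)$ varying smoothly, while It\^o applied to $\|B_t-\Theta_t(\omega^*)\|$ with $\omega^*$ frozen gives
\begin{equation*}
d\tilde{\rho}(t)=d\beta_t+\frac{F(\tilde{\rho}(t))+(n-1)/2}{\tilde{\rho}(t)}\,dt,
\end{equation*}
where $\beta_t$ is a one-dimensional Brownian motion (by L\'evy's characterization, since the coefficient $\mathbf{u}(B_t-\Theta_t(\omega^*(t)))$ has unit length even across jumps of $\omega^*$). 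Under $F\geq C^*n$, the comparison theorem for one-dimensional SDEs dominates $\tilde{\rho}$ from below by a Bessel process of dimension $\delta=(2C^*+1)n$, and the standard scale-function calculation yields $p\geq 2^{\delta-2}/(1+2^{\delta-2})$, which strictly exceeds $1/2$ whenever $\delta>2$ and tends to $1$ as $C^*\to\infty$; in particular, choosing $C^*\geq1$ suffices for all $n\geq1$.

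Given $p>1/2$, part (i) follows by comparing $\log_2\rho_i=\log_2\rho_0+\sum_{j=1}^{i}\log_2\zeta_j$ with a biased random walk: the increments $\log_2\zeta_j\in\{-1,+1\}$ are, conditionally on $\mathcal{F}_{\tau_{j-1}}$, at least as biased toward $+1$ as an i.i.d.\ $\operatorname{Bernoulli}(p)$ walk with drift $2p-1>0$, so $\log_2\rho_i\to\infty$ a.s. For part (ii), Lemma~\ref{lem:scaling} gives $\tau_i-\tau_{i-1}=\rho_{i-1}^2\,\tilde{T}_i$, where $\tilde{T}_i$ is the exit time of the rescaled distance process from $(1/2,2)$; each $\tilde{T}_i>0$ is a.s.\ finite, and the events $\{\tilde{T}_i>\epsilon\}$ have probability uniformly bounded away from $0$ for some fixed $\epsilon>0$ (by the uniform bound $F\geq C^*n$ in the rescaled problem). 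The second Borel--Cantelli lemma (using independence from the strong Markov property) then gives $\sum_i\tilde{T}_i=\infty$ a.s., and combined with $\rho_i\to\infty$ this yields $\tau_i\geq\sum_{j\leq i}\tilde{T}_j\to\infty$.

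The principal obstacle is rigorously justifying that $\tilde{\rho}(t)$ is a continuous semimartingale satisfying the SDE displayed above. A priori $\tilde{\rho}$ is only Lipschitz in $t$, and $\omega^*(t)$ may fail to be unique on an exceptional set, so the envelope computation needs care at argmin crossings. A robust workaround is to approximate the sphere $S^{n-1}$ by finite $\epsilon_k$-nets and take the minimum of finitely many Bessel-like processes, each satisfying a bona fide SDE; the discretization gap is controlled by a Gronwall estimate on the truncated flow $\psi^{(N)}$ arising from the Lipschitz constant of $f_N$. Passing to the limits $\epsilon_k\to0$ and $N\to\infty$ then recovers the scale-function bound needed for the comparison with a Bessel process.
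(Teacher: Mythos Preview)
Your displayed SDE for $\tilde\rho(t)$ is incorrect, and this is the essential gap. The envelope theorem is a first-order device: it matches $\nabla_x\min_\omega\|x-\Theta_t(\omega)\|$ with $\nabla_x\|x-\Theta_t(\omega^*)\|$ at the argmin, which is why the martingale part comes out as a one-dimensional Brownian motion. But the It\^o correction is a second-order object, and the Laplacians do \emph{not} agree. Take $F\equiv0$: the boundary is then the fixed sphere $\partial\mathbf{B}_{\rho_0}(0)$, so $\tilde\rho(t)=\rho_0-\|B_t\|$ and
\[
d\tilde\rho(t)=d\beta_t-\frac{n-1}{2\|B_t\|}\,dt
  =d\beta_t-\frac{n-1}{2\bigl(\rho_0-\tilde\rho(t)\bigr)}\,dt,
\]
a \emph{negative} drift coming from the mean curvature of the boundary as seen from the concave side. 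Your formula instead predicts the positive drift $(n-1)/(2\tilde\rho)$. For general $F$ the flow contribution $F(\tilde\rho)/\tilde\rho$ is indeed correct (it is first-order), but the Bessel-type term must be replaced by a curvature term for the evolving hypersurface $\partial\tilde{\mathbb A}_t$, which can be large and negative and is not controlled by the hypothesis $F\ge C^*n$ alone. The finite $\epsilon$-net workaround does not repair this: the minimum of finitely many processes $R_t(\omega_j)$, each with drift $(n-1)/(2R_t(\omega_j))$, is \emph{not} bounded below by a Bessel process of the same index---in the $F\equiv0$ example that minimum converges, as the net is refined, to $\rho_0-\|B_t\|$, whose drift has the opposite sign.

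The paper avoids this obstruction by not writing an SDE for $\tilde\rho$ at all. It first shows (Lemma~\ref{lemma_1}) that with probability greater than $2/3$ the Brownian motion stays inside $\mathbf{B}_{1/2}(0)$ for time at least $C_1/n$, and then argues purely geometrically: during that time every boundary point $y$ in the shell $\mathbf{B}_{5/2}(0)\setminus\mathbf{B}_1(0)$ is pushed outward in the radial-from-origin direction by at least $c\cdot(C^*n/3)\cdot(C_1/n)>3/2$ once $C^*$ is large, forcing $\rho_1=2\rho_0$. This yields $P(\rho_i=2\rho_{i-1}\mid\mathcal{F}_{\tau_{i-1}})\ge2/3$ without any semimartingale analysis of $\tilde\rho$. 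Your random-walk comparison for part~(i) is fine once such a bound is available, and your argument for part~(ii) is essentially sound, though the uniform lower bound $P(\tilde T_i>\epsilon\mid\mathcal{F}_{\tau_{i-1}})\ge q>0$ requires the \emph{upper} bound on $F$ (boundedness), not the lower bound you cite.
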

The remainder of this section is therefore dedicated to the proof of Lemma \ref{lem:properties-rho-tau}. 
Before we start with the proof, we introduce the following auxiliary lemmas, which will be proved later in this section.

The following lemma establishes a lower bound on the tail distribution of the Brownian exit time from a ball of radius $1/2$.  
\begin{lemma}
\label{lemma_1} 
Let $\sigma$ be the first time that an $n$-dimensional standard Brownian motion $B_t$ reaches $\|B_t\|=1/2$,  and assume 
$0\le p<1$. 
There exists a constant $C_1>0$ not depending on the dimension $n\ge 1$ such that 
\begin{equation}
\label{eq:first-goal}
P\left(\sigma>\frac{C_1}{n}\right)>p.
\end{equation}
\end{lemma}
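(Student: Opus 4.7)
The plan is to observe that $\|B_t\|^2$ is a nonnegative submartingale with a linear-in-$n$ mean, and then apply Doob's maximal inequality to control the supremum of $\|B_t\|$ over a short time interval whose length scales like $1/n$.

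First, I would write $\|B_t\|^2 = \sum_{i=1}^n (B_t^{(i)})^2$. Since the coordinate Brownian motions are independent with $E[(B_t^{(i)})^2]=t$, we get $E[\|B_t\|^2] = nt$. Moreover, by It\^o's formula, $d\|B_t\|^2 = 2B_t\cdot dB_t + n\,dt$, so $\|B_t\|^2$ is a nonnegative submartingale.

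The key step is Doob's maximal inequality applied to this submartingale: for every $t\ge0$,
\begin{equation*}
P\left(\sup_{0\le s\le t}\|B_s\|\ge\tfrac{1}{2}\right)
=P\left(\sup_{0\le s\le t}\|B_s\|^2\ge\tfrac{1}{4}\right)
\le 4\,E[\|B_t\|^2] = 4nt.
\end{equation*}
Noting that $\{\sigma>t\}=\{\sup_{s\le t}\|B_s\|<1/2\}$, we obtain
\begin{equation*}
P\bigl(\sigma>t\bigr)\ge 1-4nt.
\end{equation*}

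Finally, I would set $t=C_1/n$ and choose $C_1=(1-p)/4-\varepsilon$ for a small $\varepsilon>0$ (or, more simply, any $C_1\in(0,(1-p)/4)$), which yields $P(\sigma>C_1/n)\ge 1-4C_1>p$, as required. The constant $C_1$ depends on $p$ but not on $n$, which is the content of the lemma. I do not anticipate a real obstacle here: the only subtlety is that Doob's inequality must be applied to the nonnegative submartingale $\|B_t\|^2$ rather than directly to $\|B_t\|$, and squaring the threshold $1/2$ is what makes the bound sharp enough to absorb the factor $n$ in $E[\|B_t\|^2]$ via the choice $t=C_1/n$.
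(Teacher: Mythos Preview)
Your proof is correct and considerably simpler than the paper's. The paper works with the Bessel SDE for $D_t=\|B_t\|$, introduces a chain of auxiliary processes $D^{(1)},D^{(2)},D^{(3)}$ with modified drifts, invokes a comparison theorem (Ikeda--Watanabe), computes an exit probability via an explicit harmonic function, applies the reflection principle and a Gaussian tail bound, and finally patches in the small-$n$ case by a separate compactness argument. Your argument bypasses all of this by recognizing that $\|B_t\|^2$ is a nonnegative submartingale with mean $nt$ and applying Doob's $L^1$ maximal inequality directly; the resulting bound $P(\sigma>t)\ge 1-4nt$ is uniform in $n$ from the outset, so no separate treatment of small dimensions is needed. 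The paper's route might in principle yield sharper asymptotic constants as $n\to\infty$, but for the purpose of the lemma---existence of some $C_1>0$ independent of $n$---your approach is both sufficient and more transparent.
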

The above probability is computed explicitly in Ciesielski and Taylor 
\cite{ciesielski-taylor62}, Theorem 2, page 444.  However, their expression 
involves an infinite series with roots of Bessel functions, and dependence 
on the dimension $n$ is not immediately clear.  So we give a self-contained 
proof.

Here we make a comment about the difficulties in working in 
high-dimensional space.
We want to establish a lower bound for the probability that $B$ stays 
inside an $n$-dimensional ball of radius $1/2$.  Therefore it is not 
enough to bound 
\begin{equation*}
P\left(\sigma>\frac{C_1}{n}\right) = P\left(\sup_{0 \le t \le C_1/n}\|B_t\|<\frac{1}{2}\right) 
\end{equation*}
by the probability that $B$ remains inside the cube $[-1/2,1/2]^n$, 
due to the difference between the volume of balls and cubes in high
dimensions.   Also, a high-dimensional cube has a large ratio between the 
maximum and minimum distances between the center and the boundary.  
In the proof of Lemma \ref{lemma_1}, we use the radial component 
$D_t = \|B_t\|$ of the Brownian motion and introduce stopping times and 
auxiliary processes for comparison, in order to derive (\ref{eq:first-goal}).
\\

In the following lemma we establish a lower bound on the transition probability for 
$\rho_i$. 
\begin{lemma}
\label{lem:lower-bound-two-thirds}
There exists a constant $C^*$ not depending on $n$ such that if 
$F(x)\geq C^*n$ for all $x\in\mathbb{R}^n$, then for all $i\in\mathbb{N}$,
\begin{equation*}
P\left(\rho_{i}=2\rho_{i-1}\big|\mathcal{F}_{\tau_{i-1}}\right)\geq\frac{2}{3}.
\end{equation*}
\end{lemma}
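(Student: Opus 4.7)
The plan is to use the strong Markov property and scaling to reduce to a canonical single step, derive a sharp tail estimate on the one-dimensional radial process $D_t(x)=\|\phi_t(x)\|$ for each fixed $x$, and then upgrade this to a uniform-in-$x$ bound on $\tilde{\rho}(t)=\inf_{x\in\mathbb{A}_0}D_t(x)$ within the short time window supplied by Lemma~\ref{lemma_1}.

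By Lemma~\ref{lem:strong-Markov} applied at $\sigma=\tau_{i-1}$ and Lemma~\ref{lem:scaling} with $\lambda=1/\rho_{i-1}$ (which preserves the hypothesis $F\geq C^*n$), I may assume $\tau_{i-1}=0$, $B_0=0$, $\rho_0=1$, and $\mathbb{A}_0=\mathbf{B}_1(0)^c$, so that the task becomes $P(\tilde{\rho}(\tau_1)=2)\geq 2/3$. Itô's formula applied to $D_t(x)=\|\phi_t(x)\|$ yields
\begin{equation*}
dD_t(x)=\left[\frac{F(D_t(x))}{D_t(x)}+\frac{n-1}{2\,D_t(x)}\right]dt+d\tilde{W}_t(x),
\end{equation*}
where $\tilde{W}_t(x)=\int_0^t\mathbf{u}(\phi_s(x))\cdot dW_s$ is, for each fixed $x$, a standard one-dimensional Brownian motion. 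Under $F\geq C^*n$ the drift is at least $C^*n/D_t(x)$, so $D_t(x)$ is a strongly upward-pushed diffusion; a scale-function computation yields
\begin{equation*}
P\bigl(D_t(x)\text{ reaches }1/2\text{ before }2\bigm|D_0(x)=1\bigr)\leq 2^{\,1-2C^*n},
\end{equation*}
and an even smaller bound for $D_0(x)>1$.

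To transfer this pointwise estimate to the infimum $\tilde{\rho}$, I restrict to the time window $[0,T]$ with $T=C_1/n$ given by Lemma~\ref{lemma_1}, chosen so that $P(\sup_{t\leq T}\|B_t\|<1/2)>p$ for a fixed $p$ close to $1$. On this event, for $x$ with $\|x\|$ large (say $\|x\|\geq 3$), $D_t(x)$ stays comfortably above $2$ on $[0,T]$, since the Brownian excursion and the drift displacement of $\psi_t(x)$ from $x$ are both $O(1)$. So the relevant contribution to $\tilde{\rho}$ comes from $x$ in a bounded annulus around $\partial\mathbf{B}_1$, and since the flow $\phi_t$ is a homeomorphism of $\mathbb{A}_0$ onto its image, the infimum is attained on $\phi_t(\partial\mathbf{B}_1)$. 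Cover $\partial\mathbf{B}_1$ by a net of cardinality at most $C^{n-1}$ for a universal $C$, apply the pointwise estimate at the net points, and extend to all of $\partial\mathbf{B}_1$ using the Lipschitz dependence of $\phi_t$ on $x$ (Gronwall bounds the Lipschitz constant by $e^{LT}$, where $L$ is the Lipschitz constant of the drift on $\{\|\phi\|\geq 1/2\}$, so $LT=O(1)$ whenever $L=O(n)$). Choosing $C^*$ large enough that the exponential decay $2^{-2C^*n}$ dominates the entropy $C^{n-1}$, a union bound then gives $P(\tilde{\rho}(\tau_1)=2)\geq 2/3$.

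The chief technical obstacle is this uniform-in-$x$ step: the unit sphere has covering entropy exponential in $n$, so a naive union bound demands that the pointwise tails also decay exponentially in $n$. The hypothesis $F\geq C^*n$ is designed precisely to deliver that rate, and $C^*$ must be tuned large enough that the exponential gain in the one-dimensional tail beats the exponential entropy loss in the covering argument.
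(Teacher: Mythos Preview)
Your reduction by the strong Markov property and scaling matches the paper's, but the core of your argument diverges substantially from the paper's and contains real gaps.

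The paper's argument is \emph{deterministic} on the good event $\mathbf{A}=\{\sigma>C_1/n\}$ supplied by Lemma~\ref{lemma_1}. While $B_t\in\mathbf{B}_{1/2}(0)$, any point $y\in\mathbf{B}_{5/2}(0)\setminus\mathbf{B}_1(0)$ is at distance at most $3$ from $B_t$, so the drift on $y$ has magnitude at least $C^*n/3$; a two-line planar computation then shows the projection of this drift onto the outward radial direction (from the origin) is at least $1/2$ of its magnitude. Integrating over $[0,C_1/n]$ yields a radial displacement of at least $C^*C_1/6>3/2$ once $C^*$ is large, so every such $y$ is pushed outside $\mathbf{B}_{5/2}(0)$ before $B_t$ exits $\mathbf{B}_{1/2}(0)$. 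Hence on $\mathbf{A}$ one has $\tilde{\rho}(\tau_1)=2$ with certainty, and $P(\mathbf{A})>2/3$ finishes the proof. No It\^o formula, no scale function, no union bound, and no Lipschitz control of the flow are needed.

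Your route, by contrast, has two genuine problems. First, the claim $LT=O(1)$ requires the Lipschitz constant $L$ of the drift $f_\infty(y)=F(\|y\|)\mathbf{u}(y)/\|y\|$ on $\{\|y\|\geq1/2\}$ to be $O(n)$; but $L$ is of order $\|F\|_\infty+\mathrm{Lip}(F)$, and the hypothesis $F\geq C^*n$ gives only a \emph{lower} bound on $F$---there is no upper bound on $\|F\|_\infty$ or $\mathrm{Lip}(F)$ in terms of $n$. So $e^{LT}$ need not be bounded, and your net-to-everywhere extension breaks. Second, even granting a successful union bound showing that with high probability \emph{each} $D_t(x)$ reaches $2$ before $1/2$, this is not the event $\{\tilde\rho(\tau_1)=2\}$: the latter requires a \emph{common} time at which $\inf_x D_t(x)\geq2$, and since the driving one-dimensional Brownians $\tilde W_t(x)=\int_0^t\mathbf{u}(\phi_s(x))\cdot dW_s$ vary with $x$, the individual first-passage times to level $2$ need not synchronize. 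Your scale-function bound does not address this. The paper's deterministic ODE argument treats all $x$ simultaneously and so sidesteps both difficulties entirely.
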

Now we are ready to prove Lemma \ref{lem:properties-rho-tau}.  
\begin{proof}[Proof of Lemma \ref{lem:properties-rho-tau}]

(i) We define
\begin{equation*}
X_i=\log_2(\rho_i/\rho_{i-1})
\end{equation*}
where $X_i$ takes the values $\{-1,1\}$. Lemma 
\ref{lem:lower-bound-two-thirds} states that 
\begin{equation*}
P\left(X_i=1\big|\mathcal{F}_{\tau_{i-1}}\right)\geq\frac{2}{3}.
\end{equation*}
By expanding the probability space if necessary, we can construct a 
$\mathcal{F}_{\tau_i}$-measurable random variable $Y_i\leq X_i$ for all $i$, 
such that $Y_i$ also takes on the values $1,-1$, and such that 
\begin{equation*}
P\left(Y_i=1\big|\mathcal{F}_{\tau_{i-1}}\right)=\frac{2}{3}.
\end{equation*}
It follows that $Y_1,Y_2,\ldots$ is a sequence of i.i.d. random variables 
with expectation $1/3$.  The strong law of large numbers along with the 
fact that $X_i\geq Y_i$ implies that almost surely,
\begin{equation*}
\liminf_{m\to\infty}\log_2\left(\frac{\rho_m}{\rho_0}\right)
=\liminf_{m\to\infty}\sum_{i-1}^{m}X_i
\geq\lim_{m\to\infty}\sum_{i-1}^{m}Y_i=\infty
\end{equation*}  
and we get part (i) 

(ii) Since $(\tau_i)_{i\in\mathbb{N}}$ is a nondecreasing sequence of random variables, the limit $\tau_\infty=\lim_{i\to\infty}\tau_i$ almost surely exists, and we would like to show that it is infinity a.s.
By the construction of the sequence $\{\rho_i\}_{i\geq 0}$ and since $\rho_i\to\infty$ a.s. by part (i) the distance between $B_t$ and $\mathcal{A}_t$ tends to $\infty$ as $t\uparrow\tau_\infty$ and is strictly positive for $t<\tau_\infty$ with probability one.
  
Therefore, for almost every realization $\omega$, we can find a natural number $N(\omega)$ such that $\frac{1}{N} \le \psi^{\mathbb{A}_0}_t(x)$ for all $0\le t < \tau_\infty$ and $x \in\mathbb{A}_0$. On this $\omega$, the drift in equation (\ref{main_equation})  is therefore identical to $f_N$ in (\ref{eq:def-f-N}).
Since $f_N$ is a Lipschitz function, the corresponding flow is finite for all time with probability one.  Therefore, the distance between $B_t$ and $\mathcal{A}_t$ tends to $\infty$ as 
$t\uparrow\tau_\infty$, only if $\tau_\infty=\infty$ a.s. It follows that $\tau_i\to\infty$ almost surely, and this proves part (ii).
\end{proof}

By our earlier comment, Theorem~\ref{thm_not_hitting} follows from Lemma \ref{lem:properties-rho-tau}. The remainder of this section is dedicated to the proofs of Lemmas \ref{lemma_1} and \ref{lem:lower-bound-two-thirds}.  

\begin{proof}[Proof of Lemma \ref{lemma_1}]
We first show that  there exist a natural 
number $n^*$ and a constant $C_{1,n^*}>0$ such that (\ref{eq:first-goal}) holds 
whenever $n \ge n^*$.  Then, we generalize the requirement $n \ge n^*$ to $n\ge 1$ by possibly choosing a number $C_1>0$ smaller than $C_{1,n^*}$.

Recall that $D_t=\|B_t\|$ is a Bessel process which satisfies
\begin{equation}
\label{eq:Bessel}
dD_t=\frac{n-1}{2D_t}dt+d\tilde{W}_t,
\end{equation}
for some one-dimensional Brownian motion $\tilde{W}_t$.  

Let $\sigma_0$ be the first time $t$ that $D_t=3/8$, and let 
\begin{equation*}
D^{(1)}_t=D_{\sigma_0+t}.
\end{equation*}
Let $\mathcal{G}_t$ be the filtration generated by $\tilde{W}_t$.  
By the strong Markov property of Brownian motion, conditioned on 
$\mathcal{G}_{\sigma_0}$, $D^{(1)}_t$ satisfies 
(\ref{eq:Bessel}) with the initial condition $D^{(1)}_0=3/8$.  Now define 
$D^{(2)}_t$ to be the solution of
\begin{align*}
dD^{(2)}_t&=f(D^{(2)}_t)dt+d\tilde{W}_t,  \\
D^{(2)}_0&=\frac{3}{8}, 
\end{align*}
where 
\begin{equation*}
f(x)=
\begin{cases}
(n-1)/(2x), & \text{if $0<x<1/4$},  \\
2(n-1), & \text{if $x\geq1/4$}.
\end{cases}
\end{equation*}
By a standard comparison result (see Theorem 1.1 in Chapter V.1 of Ikeda and Watanabe \cite{iw89}), 
since $D^{(1)}_t$ has drift which is not greater than the drift of 
$D^{(2)}_t$, we conclude
\begin{equation*}
D^{(1)}_t\leq D^{(2)}_t
\end{equation*}
for all $t\geq0$ with probability 1.  Actually, the result of Ikeda and 
Watanabe does not cover locally unbounded drift, so we must argue via a 
truncation argument.  We leave these details to the reader.  

So $D^{(1)}_t$ will reach $1/2$ later than $D^{(2)}_t$ reaches $1/2$.  
Now let $\sigma_1$ be the first time $t$ that $D^{(1)}_t=1/2$, and 
let $\sigma_2$ be the first time $t$ that $D^{(2)}_t=1/2$.  
It follows that $\sigma\geq\sigma_1\geq\sigma_2$, and so
\begin{equation*}
P(\sigma>x)\geq P(\sigma_2>x).
\end{equation*}
Finally, let $\sigma_3$ be the first time $t$ that $D^{(2)}_t=1/4$ or
$D^{(2)}_t=1/2$.  Then $\sigma_2\geq\sigma_3$, and so
\begin{equation*}
P(\sigma>x)\geq P(\sigma_2>x)\geq P(\sigma_3>x).
\end{equation*}

Therefore, in order to prove (\ref{eq:first-goal}), we look for a lower bound on $P(\sigma_3>x)$.  To this end, let
$D^{(3)}_t$ be the solution of 
\begin{align*}
dD^{(3)}_t&=2(n-1)dt+d\tilde{W}_t, \\
D^{(3)}_0&=\frac{3}{8},
\end{align*}
and let $\sigma_4$ to be the first time $t$ that $D^{(3)}_t=1/4$ or 
$1/2$.  Then $\sigma_3=\sigma_4$.  Finally, let $\sigma_5$ be the 
first time $t$ that $D^{(3)}_t=1/2$.  Then we have
\begin{align*}
P(\sigma>x)&\geq P(\sigma_3>x)=P(\sigma_4>x)  \\
&\geq P\left(\sigma_4>x, D^{(3)}_{\sigma_4}=1/2\right). 
\end{align*}
Note that on $D^{(3)}_{\sigma_4}=1/2$, we have $\sigma_4=\sigma_5$, and also
$\{D^{(3)}_{\sigma_4}=1/2\}^c=\{D^{(3)}_{\sigma_4}=1/4\}$.  
Thus, we can continue the above inequality as follows.  
\begin{align}
\label{eq:lower-bound-on-sigma-prob}
P(\sigma>x)&\geq P\left(\sigma_4>x, D^{(3)}_{\sigma_4}=1/2\right) \\
&= P\left(\sigma_5>x, D^{(3)}_{\sigma_4}=1/2\right) \nonumber\\
&= P\left(\sigma_5>x\right)-P\left(\sigma_5>x, D^{(3)}_{\sigma_4}=1/4\right) 
 \nonumber\\
&\geq P\left(\sigma_5>x\right)-P\left(D^{(3)}_{\sigma_4}=1/4\right) .\nonumber
\end{align}

We first give an upper bound on $P(D^{(3)}_{\sigma_4}=1/4)$.
Recall that 
\begin{equation*}
h(x)=\frac{e^{-2(n-1)x}-e^{-(n-1)}}{e^{-(n-1)/2}-e^{-(n-1)}}
\end{equation*}
is a harmonic function for $D^{(3)}_t$, since the process $D^{(3)}_t$ has 
generator
\begin{equation*}
Gf(x)=2(n-1)f'(x)+\frac{1}{2}f''(x).
\end{equation*}
Furthermore, $h(1/4)=1$ and $h(1/2)=0$. We deduce that
\begin{align}
\label{sigma3bound2}
P\left(D^{(3)}_{\sigma_4}=1/4\right)=h(3/8)
&=\frac{e^{-3(n-1)/2}-e^{-2(n-1)}}{e^{-(n-1)}-e^{-2(n-1)}}  \\
&= e^{-(n-1)/2}\big(1+o(1)\big)    \nonumber
\end{align}
as $n \rightarrow \infty$.

We now focus on $P(\sigma_5>x)$. Assume that 
\begin{equation*}
x\leq 1/(8(n-1)).
\end{equation*}
Note that for $t\leq x$ we have $(n-1)t\leq 1/8$.  
Thus, for such values of $x$, using the reflection principle we have
\begin{align*}
P(\sigma_5>x)&\geq P\left(\sup_{0\leq t\leq x}\tilde{W}(t)<1/8\right)  \\
&= 1-P\left(\sup_{0\leq t\leq x}\tilde{W}(t) \geq 1/8\right)  \\
&= 1-2P(\tilde{W}(x) \geq 1/8).
\end{align*}
Using a standard Gaussian random variable $Z$, we get
\begin{align*}
P(\tilde{W}(x) \geq 1/8)&=P(\sqrt{x}Z \geq 1/8)=P\left(Z \geq \frac{1}{8\sqrt{x}}\right)\\
&\le \frac{4\sqrt{2}\sqrt{x}}{\sqrt{\pi}}\exp\left(-\frac{1}{128x}\right).
\end{align*}
See Durrett \cite{dur10}, Theorem 1.2.3, page 12, for the Gaussian estimate.  

With $x\leq1/(8(n-1))$, we have
\begin{equation*}
P(\tilde{W}(x) \geq 1/8)\le \frac{2}{\sqrt{\pi}}\frac{1}{\sqrt{n-1}}\exp\left(-\frac{n-1}{16}\right).
\end{equation*}
So, 
\begin{equation}
\label{sigma4bound}
P(\sigma_5>x)\ge 1-\frac{4}{\sqrt{\pi}}\frac{1}{\sqrt{n-1}}\exp\left(-\frac{n-1}{16}\right).
\end{equation}

Combining (\ref{eq:lower-bound-on-sigma-prob}), (\ref{sigma3bound2}), and 
(\ref{sigma4bound}), we can conclude that for large values of $n$ and 
$x\leq 1/(8(n-1))$, we get
$P(\sigma>x)> p$.  In particular,
\begin{equation*}
P\left(\sigma>\frac{1}{8n}\right)>p
\end{equation*}
for all $n \ge n^*$ for some $n^*$.

It remains to lower the requirement $n \ge n^*$ to $n \ge 1$. Note that 
for each fixed value of $n$, we have $P(\sigma>C/n)\to 1$ 
as the constant $C \downarrow 0$.  Thus for each $1 \le n < n^*$, we can find $C_{1,n}>0$ such that 
\begin{equation*}
P\left(\sigma>\frac{C_{1,n}}{n}\right)>p.
\end{equation*}
By choosing
\begin{equation*}
C_1=\min\left\{C_{1,1},\ldots,C_{1,n^*-1}, \frac{1}{8}\right\},
\end{equation*}
we can conclude that
\begin{equation*}
P\left(\sigma>\frac{C_1}{n}\right)>p
\end{equation*}
for all $n \ge 1$.
\end{proof}

\begin{proof}[Proof of Lemma \ref{lem:lower-bound-two-thirds}]
Notice that by the strong Markov property applied at time $\tau_{i-1}$, we can 
start afresh at that time and relabel $\tau_{i-1}$ as $\tau_0=0$.  By 
translating if necessary, we may assume that 
\begin{enumerate}
\item $i=1$
\item $\tau_0=0$
\item $B_{0}=0$
\end{enumerate}
Furthermore, by scaling time and space via Lemma \ref{lem:scaling}, we can 
assume that 
\begin{enumerate}
\item[(4)] $\rho_{0}=1$.  
\end{enumerate}
These transformations may change our drift $F$, but not the 
lower bound on $F$.  Thus, to prove Lemma \ref{lem:lower-bound-two-thirds}, 
it suffices to show
\begin{equation} 
\label{eq:suffices-two-thirds}
P(\rho_{1}=2)\geq\frac{2}{3}.
\end{equation}

Our intuitive idea is as follows.  We show that with high probability the 
time spent by the Brownian motion in $\mathbf{B}_{1/2}(0)$ is greater than 
$C_1/n$, for some constant $C_1$ not depending on $n$.  If this event 
occurs, then the magnitude of the total drift on points $y \in \mathcal A_{t} \cap \left(\mathbf{B}_{5/2}(0)\setminus\mathbf{B}_{1}(0)\right)$ is 
\begin{equation*}
\frac{F(\|y-B_t\|)}{\|y-B_{t}\|} \ge \frac{C^*n}{3}
\end{equation*}
within 
the time interval $[0,C_1/n]$.  The inequality was from the hypothesis $F(x)\geq C^*n$ of Lemma~\ref{lem:lower-bound-two-thirds}  and from the distance 
between a point in $\mathbf{B}_{5/2}(0)$ and the  Brownian motion in 
$\mathbf{B}_{1/2}(0)$ is at most $5/2+1/2=3$.  
Furthermore, we will see that 
the projection of this drift onto the outward radial direction at worst
reduces this drift by a multiplicative factor $c$. See Figure~\ref{fig_angle} for the illustration. Thus, it follows from 
(\ref{main_equation}) the total outward radial drift on a point $y \in \mathbf{B}_{5/2}(0)\setminus\mathbf{B}_{1}(0)$ for the time interval $[0,C_1/n]$ is at least
\begin{equation} 
\label{drift-he} 
\inf_{y \in 
\mathbf{B}_{5/2}(0)\setminus\mathbf{B}_{1}(0)}\left\{ \frac{F(\|y-B_t\|)}{\|y-B_{t}\|}\right\} \cdot \frac{C_1}{n}  \ge \frac{C^*n}{3}\cdot c\cdot\frac{C_1}{n}>3/2,
\end{equation}
if $C^{*}$ is large enough. Also observe that $3/2$ is the radial distance 
from $\mathbf{B}_{1}(0)$ to the boundary of $\mathbf{B}_{5/2}(0)$. Roughly 
speaking, this means that if $C^{*}$ is large enough, then any point in the 
region $\mathbf{B}_{5/2}(0)\setminus\mathbf{B}_{1}(0)$ will now be at least 
at distance $5/2$ from the origin, while the Brownian motion remains in 
$\mathbf{B}_{1/2}(0)$.  Note that the drift might be smaller than $3/2$
if the point exits the ball $\mathbf{B}_{5/2}(0)$ during 
the time interval $[0,C_1/n]$, but this is what we want to show anyway.  
Also, once it exits this ball, it never reenters.  So 
in both cases the point in $\mathbf{B}_{5/2}(0)\setminus\mathbf{B}_{1}(0)$ 
exits the ball $\mathbf{B}_{5/2}(0)$, and the distance between the Brownian 
particle and $\mathcal A_{t}$ increases by a factor of $2$.  

\begin{figure}[!h]
	\centering
	\setlength{\unitlength}{0.1\textwidth}
	\begin{picture}(10,7)
	\put(1,0){\includegraphics[width=90mm]{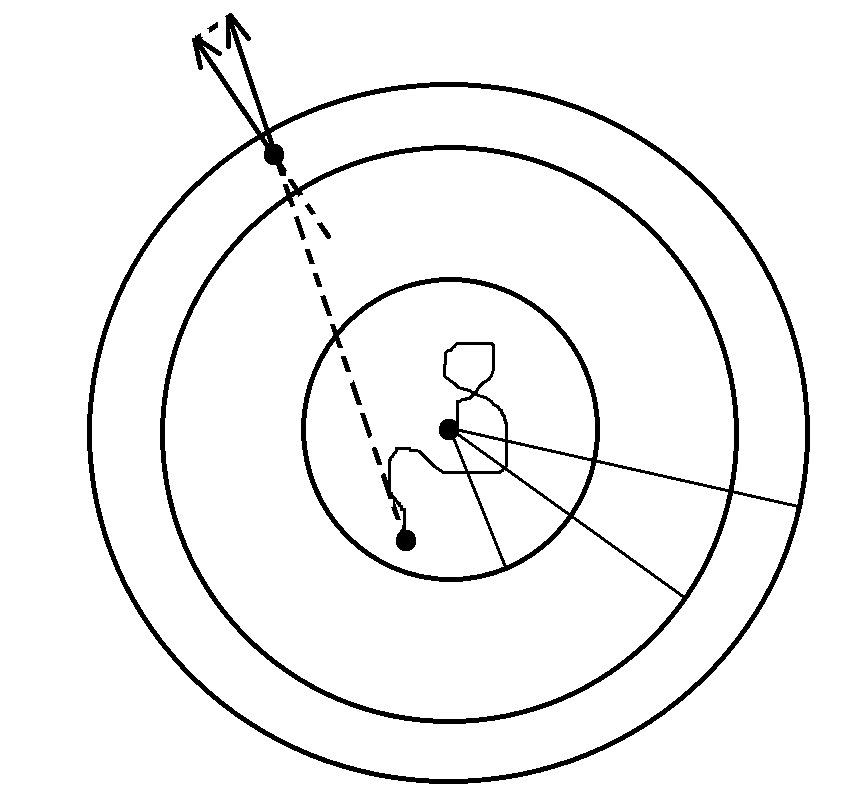}}
	\put(3.05,5.1){$y$}
	\put(3.2,6){drift}
	\put(1.6,5.9){radial}
	\put(1.4,5.6){component}
	\put(1.6,5.3){of drift}
	\put(4.35,3.25){$B_0$}
	\put(3.95,2.2){$B_t$}
	\put(5.2, 2.3){$\frac{1}{2}$}
	\put(6.1, 1.7){1}
	\put(6.4, 2.9){$\frac{5}{2}$}
	\end{picture}
	\caption{}
	\label{fig_angle}
\end{figure}

In order to make this argument precise, let $\mathbf{A}$ be the event that 
$\sigma>C_1/n$.  We will show that on the event $\mathbf{A}$, we have
$\rho_1=2\rho_0$.  Since Lemma \ref{lemma_1} shows that $P(\mathbf{A})>p$, 
choosing $p>2/3$ will then give us (\ref{eq:suffices-two-thirds}). 
From the reasoning which was given in the preceding paragraph and specifically in (\ref{drift-he}),
all that is left to verify is the statement about the radial part 
of the drift.  Let $a\in\mathbf{B}_{1/2}(0)$ and
$b\in\mathbf{B}_{5/2}(0)\setminus\mathbf{B}_{1}(0)$.  We wish to show that 
there is a constant $c>0$ not depending on $n$ such that for any such pair 
of points $a,b$ the projection of $b-a$ on $b$ has magnitude bounded below 
by $c$.  First we note that the points $0,a,b$ determine a plane passing 
through the origin, so we may assume that $n=2$, and our space is the $x-y$ 
plane.  We may also assume that $b$ lies on the vertical axis, with 
$y$-coordinate between $1$ and $5/2$.  Then the projection is bounded by
\begin{align} 
\label{eq-proj}
|(y-x)\cdot y|&=|y\cdot y-x\cdot y|  \\
&\geq \|y\|^2-\|x\|\cdot\|y\|  \nonumber\\
&\geq (\|y\|-\|x\|)\|y\|  \nonumber\\
&\geq \frac{1}{2}\cdot1. \nonumber
\end{align}
So, we may choose $c = 1/2$. This finishes the proof of Lemma \ref{lem:lower-bound-two-thirds}.
\end{proof}

\section{Proof of Theorem \ref{theorem-hit}} 

We again assume that $n\geq2$.  
First we introduce some definitions and notation which will help us to 
compare the problem of hitting the origin for $\phi^{\mathcal A_0}$, to 
the corresponding problem for a class of biased random walks. 
\paragraph{\textbf{Notation.}}
Recall our previous notation $x= (x_{1},...,x_{n})$. We define 
$x^{\bot}=(x_{2},...,x_{n})$. We further write
$B_{t}=(B^{(1)}_{t},...,B^{(n)}_{t})$ and define
$B_{t}^{\bot}= (B^{(2)}_{t},...,B^{(n)}_{t})$.

As stated in Theorem \ref{theorem-hit}, we assume
\begin{equation}
\label{eq:assumption-half-space}
\mathcal{A}_0=\{x\in\mathbb{R}^n: x_1\geq1\}.
\end{equation}
By scaling via Lemma \ref{lem:scaling}, if we prove Theorem 
\ref{theorem-hit} for this definition of $\mathcal{A}_0$, we have also 
proved it for 
$\mathcal{A}_0=\{x\in\mathbb{R}^n: x_1\geq a\}$ for any $a>0$.  

Next, we define the following objects by induction.  
\begin{enumerate}
\item[(i)] A sequence $\tau_0\leq\tau_1\leq\cdots$ of stopping times with 
limit \newline
$\tau_\infty=\lim_{i\to\infty}\tau_i$.  
\item[(ii)] A sequence of positive random variables $\rho_0,\rho_1,\ldots$.
\item[(iv)] A collection $\{\mathbb{A}_t\}_{t<\tau_\infty}$ of random subsets of $\mathbb{R}^n$. 
\end{enumerate}
Here, $\rho_i$ is the distance of $B_{\tau_i}$ to $\mathbb{A}_{\tau_i}$. Recall that $\tau$ was defined in (\ref{tau-def-phi}). Note that it is possible that $\tau<\tau_\infty$.  
One of the main ingredients in the induction is to choose $\mathcal{A}_t$ such that, $\mathbb{A}_t\subset\mathcal{A}_t$ for $t<\tau\wedge\tau_\infty$. 

To begin with, let $\tau_0=0$ and let
\begin{equation*}
\rho_0=\sup_{x\in \partial\mathcal A_{0}} {|B_{0}^{(1)}-x_1|}=1,
\end{equation*}
where $ \partial\mathcal A_{0} = \{x\in\mathbb{R}^n: x_1 = 1\}$ is the boundary of the set $\mathcal A_{0}$.
Let $\tau_1$ be the first time $t>0$ such that 
\begin{equation} \label{tau1-def} 
\sup_{x\in \partial \mathcal A_{t}} {|B_{t}^{(1)}-x_1|}=\frac{1}{2}\text{ or }2
\end{equation}
and note that $\tau_1<\tau$ unless both of these times are $\infty$.  We 
leave it to the reader to check that $\tau_1<\infty$ with probability one.  
For $0\leq t<\tau_1$, let
\begin{equation*}
\mathbb{A}_t=\mathcal{A}_t.  
\end{equation*}
We provisionally define
\begin{equation*}
\tilde{\mathbb{A}}_{\tau_1}=\mathcal{A}_{\tau_1}.  
\end{equation*}
Finally, we let
\begin{equation*}
\rho_1=\sup_{x\in \partial \tilde{\mathbb{A}}_{\tau_1}} {|B_{\tau_1}^{(1)}-x_1|}.
\end{equation*}
See Figure~\ref{fig_setup2} for the setup.

\begin{figure}[!h]
	\centering
	\setlength{\unitlength}{0.1\textwidth}
	\begin{picture}(10,4)
	\put(0,0){\includegraphics[width=\textwidth]{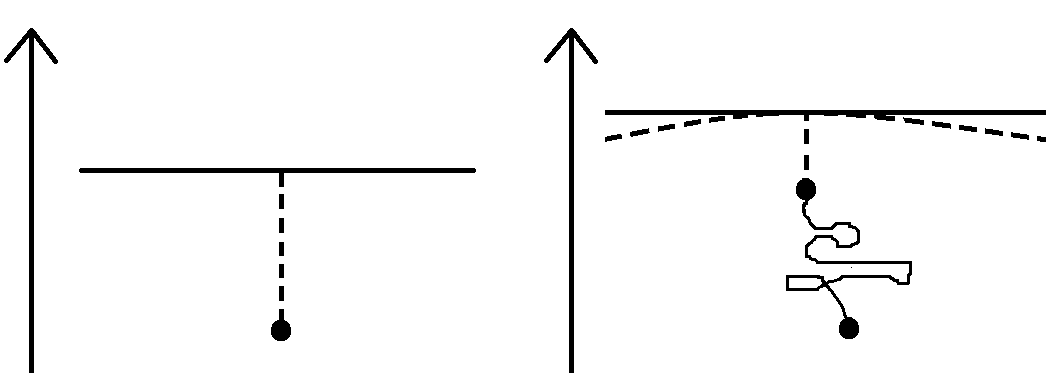}}
	\put(2.1, 2.4){$\mathcal{A}_0 = \mathbb{A}_0$}
	\put(2.8, 1.3){$\rho_0 = 1$}
	\put(5.7, 2){$\partial\tilde{\mathbb{A}}_{\tau_1}$}
	\put(5.7, 2.8){$\partial\mathbb{A}_{\tau_1}$}
	\put(7.8, 2.2){$\rho_1 = \frac{1}{2}$}
	\put(2.8, 0.3){$B_0 $}
	\put(7.8, 1.8){$B_{\tau_1}$}
	\put(0.15, 3.7){$x_1 $}
	\put(5.3, 3.7){$x_1$}
	\end{picture}
	\caption{The initial set $\mathbb{A}_0$ and the case when $\rho_1 = 1/2$.}
	\label{fig_setup2}
\end{figure}

Now assume that we have defined $\tau_m$, $\rho_{m}$, and 
$\{\mathbb{A}_t\}_{t < \tau_m}$ such that $\mathbb{A}_t\subset\mathcal{A}_t$ 
for $t < \tau_m$.  First we define
\begin{equation*}
\mathbb{A}_{\tau_m}=\{x\in\mathbb{R}^n: x_1\geq B^{(1)}_{\tau_m}+\rho_m\}
\end{equation*}
and note that $\mathbb{A}_{\tau_m}\subset\mathcal{A}_{\tau_m}$.  

Recall that the shift operator $\theta_t$ was defined before Lemma \ref{lem:strong-Markov}. Next we define $\tau_{m+1}$, so that $\tau_{m+1} -\tau_m$ is the first time $t>0$ such that 
\begin{equation*}
\sup_{x\in \partial (\theta_{\tau_m}\psi^{\mathbb{A}_{\tau_m}})_t(\mathbb{A}_{\tau_m})} {|B_{t}^{(1)}-x_1|}=\frac{1}{2}\rho_{m}\text{ or }2\rho_{m}.
\end{equation*}
Then, for $0\leq t<\tau_{m+1}-\tau_m$, let
\begin{equation*}
\mathbb{A}_{t+\tau_m}=(\theta_{\tau_m}\psi^{\mathbb{A}_{\tau_m}})_t(\mathbb{A}_{\tau_m}). 
\end{equation*}
Again, we provisionally define 
\begin{equation*}
\tilde{\mathbb{A}}_{\tau_{m+1}}=(\theta_{\tau_m}\psi^{\mathbb{A}_{\tau_m}})_{\tau_{m+1}}(\mathbb{A}_{\tau_m}).
\end{equation*}
and define
\begin{equation*}
\rho_{m+1}=\sup_{x\in \partial\tilde{\mathbb{A}}_{\tau_{m+1}}} {|B_{\tau_{m+1}}^{(1)}-x_1|}.
\end{equation*}

This finishes our inductive definition. Our next goal is to show the following proposition, which essentially proves  Theorem \ref{theorem-hit}. The rest of this section is devoted to the proof of this proposition. 
\begin{proposition} 
\label{prop-rw-hit}
We have 
\begin{itemize} 
\item[(i)] $\lim_{i\to\infty}\rho_i=0$ almost surely.  
\item[(ii)] $P(\tau_\infty<\infty)=1$.
\end{itemize} 
\end{proposition}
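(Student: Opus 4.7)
The plan is to mirror the argument used for Lemma \ref{lem:properties-rho-tau}, showing this time that $\rho_i$ drifts \emph{downward} rather than upward. The key input is the following analogue of Lemma \ref{lem:lower-bound-two-thirds}: there exists $c^* > 0$, independent of $n$, such that if $F(x) \leq c^* n^{3/4}$ for all $x$, then for every $i \geq 1$,
\begin{equation*}
P\left(\rho_i = \frac{1}{2}\rho_{i-1} \,\Big|\, \mathcal{F}_{\tau_{i-1}}\right) \geq \frac{2}{3}.
\end{equation*}
Given this estimate, part (i) follows by the coupling argument of Lemma \ref{lem:properties-rho-tau}(i): setting $X_i = \log_2(\rho_i/\rho_{i-1}) \in \{-1,+1\}$, couple from above by i.i.d.\ $\pm 1$-valued variables $Y_i$ with $P(Y_i = -1) = 2/3$, so that $X_i \leq Y_i$ and $E[Y_i] = -1/3$. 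The strong law of large numbers then forces $\sum_{i=1}^m X_i \to -\infty$ almost surely, i.e.\ $\rho_i \to 0$.

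The bias estimate itself is the main obstacle. Applying the strong Markov property at $\tau_{i-1}$ and Lemma \ref{lem:scaling}, I would reduce to the case $i=1$, $\tau_0 = 0$, $B_0 = 0$, $\rho_0 = 1$, noting that these transformations preserve upper bounds on $F$. The key geometric mechanism is that, in dimension $n$, the perpendicular component $B_t^{\bot} \in \mathbb{R}^{n-1}$ of the driving Brownian motion quickly acquires norm of order $\sqrt{n}$, which forces $\|\psi_t(y) - B_t\| \gtrsim \sqrt{n}$ for every fixed boundary point $y = (1, y^{\bot})$. This has two simultaneous consequences: the drift magnitude is at most $F/\|\psi_t(y) - B_t\| \leq c^* n^{3/4}/\sqrt{n} = c^* n^{1/4}$, and the $x_1$-component of the radial unit vector $(\psi_t(y) - B_t)/\|\psi_t(y) - B_t\|$ has order $1/\sqrt{n}$; together, the $x_1$-velocity induced on any fixed boundary point is bounded by $c^* n^{-1/4}$. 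Integrating over an order-$1$ time interval (the natural time for $B_t^{(1)}$ to change by order $\rho_0 = 1$) gives a total $x_1$-displacement of the boundary that is small, uniformly in $n$, provided $c^*$ is chosen small enough. On this high-probability event $\rho_t$ is essentially $|B_t^{(1)} - 1|$; since $B_t^{(1)}$ is standard one-dimensional Brownian motion starting from $0$, by the optional stopping theorem it hits $1/2$ before $-1$ with probability exactly $2/(1/2+1) \cdot 1 = 2/3$. The most delicate quantitative point is controlling the contribution to the boundary displacement from the very short initial phase, when $\|B_t^{\bot}\|$ has not yet grown to order $\sqrt{n}$ and the drift can be large; balancing this contribution against the later, tame regime is precisely what dictates the exponent $3/4$.

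For part (ii), at each epoch the increment $\tau_{i+1} - \tau_i$ is controlled by the exit time of the one-dimensional Brownian motion $B_t^{(1)}$ from an interval of size of order $\rho_i$, which has expectation of order $\rho_i^2$. Part (i), refined by a Chernoff-type bound on the coupled biased random walk $\sum Y_k$, gives that $\rho_i$ decays geometrically in $i$ almost surely, so $\sum_i \rho_i^2 < \infty$ almost surely; hence $\tau_\infty = \sum_i (\tau_{i+1} - \tau_i) < \infty$ almost surely, which proves (ii).
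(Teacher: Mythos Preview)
Your overall architecture matches the paper's: reduce $\log_2\rho_i$ to a biased nearest-neighbor random walk, use the strong Markov property and scaling (Lemma~\ref{lem:scaling}) to normalize to $\rho_0=1$, and establish the one-step bias by showing the vertical boundary displacement $D_{\tau_1}$ is small, so that $\rho_1$ is governed by a gambler's ruin for $1-B^{(1)}_t$. Your argument for (ii) via $\tau_{i+1}-\tau_i \lesssim \rho_i^2$ and geometric decay of $\rho_i$ is also sound (and in fact the paper's writeup does not spell this part out).

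The genuine gap is in the heuristic for the bias estimate. The claim that $\|\psi_t(y)-B_t\|\gtrsim\sqrt{n}$ for \emph{every} boundary point $y=(1,y^\perp)$ is false: since $y^\perp$ ranges over all of $\mathbb{R}^{n-1}$, for the point with $y^\perp=B_t^\perp$ the distance is just the vertical gap, of order $1$, and the instantaneous vertical drift on that point is of order $c^*n^{3/4}$, not $c^*n^{-1/4}$. You frame the difficulty as an ``initial phase before $\|B_t^\perp\|$ grows,'' but that is specific to $y^\perp=0$; for every $y^\perp$ lying on or near the path $B^\perp_{[0,T]}$ there is a corresponding dangerous phase at some later time, and the supremum defining $D_{\tau_1}$ sees all of them. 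What actually rescues the estimate is that the \emph{occupation time} of $B^\perp$ in any ball of radius $r$ is at most of order $r^2/n$, uniformly over the center (Proposition~\ref{prop-L-bound} and the uniformization in Step~3 of the paper's proof). The paper implements this by covering $B^\perp_{[0,T]}$ with balls at scales $e^k$, $1\le k\le\lceil\tfrac12\log n\rceil$ (Proposition~\ref{prop-cover-bm}), and summing the drift contributions over annuli for each boundary point. A second ingredient you omit entirely is the lateral drift (Step~4): $\psi_t^\perp(y)$ can itself move by as much as $n^{1/4}$ over $[0,\tau_1]$, so the set of balls relevant to a given boundary point changes in time and must be tracked before the vertical estimate can close. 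Together these yield $D_{\tau_1}\le Cc^*$ (not $Cc^*n^{-1/4}$), which is why $c^*$ must be taken small but the bound does not improve with $n$.
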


\begin{proof}
The proof of Proposition \ref{prop-rw-hit} is rather long, hence we divide it into a few steps.

But before presenting the details, we give a brief overview of the 
proof.  A point $x$ moves in the vertical direction (the $x_1$ 
direction) because Brownian motion $B_t$ spends time with 
$B^{(1)}$ close to $x_1$, and with $B^\perp$ close to $x^\perp$.  To 
take account of $|x^\perp-B^\perp|$, we cover the Brownian path 
$B_t^\perp$ with balls of various sizes, and consider the amount of time 
the Brownian motion spends in (i) balls of the smallest size and (ii) 
annuli consisting of set differences between two balls of the same 
center.  Secondly, as time progresses, the point $x$ may move not only 
in the vertical direction, but also in the lateral direction (the $x^\perp$ 
direction).  As $x$ moves in the lateral direction, it may get closer to 
certain parts of the Brownian path which did not contribute much drift 
previously.  Thus, we need to control the lateral distance over which $x$ 
might move.  Using the same balls as before, we will control the amount 
of lateral drift experienced by $x$, and thus the distance which 
$x^\perp$ can travel.  The upward drift experienced by $x_1$ will then 
be the sum of contributions from all the balls, and over all possible 
positions of $x^\perp$.  
 
\textbf{Step 1: a random walk comparison.} We set up a random walk 
comparison as we did at the beginning of Section \ref{sec:not-hit}. 
Recall that the vertical direction refers to the first coordinate of 
$\mathbb{R}^n$, and the horizontal direction refers to the remaining 
coordinates.  We will show that the vertical distance between the Brownian
motion and $ {\mathcal{A}}_{t}$ is bounded by the distance from the origin
of a one dimensional biased random walk initiated at $+1$. 

In order to do that we consider $\overline {\mathcal{A}}_{t}(x)$ which 
satisfies (\ref{main_equation}), with $\overline F(x) \equiv \|F\|_{\infty}$ 
for all $x\in \mathbb R$ instead of $F$.  
For every $i=0,1,...$, let $\overline\tau_{i}$ and $\overline \rho_{i}$ be 
the equivalents of $ \tau_{i}$ and $\rho_{i}$ with $\overline{\mathcal{A}}$ 
instead of $\mathcal{A}$. Moreover, let $\mathbb{\overline{A}}_{t}$ be the 
equivalent of $\mathbb{ A}_{t}$ with $\mathcal{\overline{A}}$ instead of $\mathcal{A}$. Then from 
the construction of $\{(\tau_{i},\rho_{i})\}_{i\geq 0}$ and $\mathbb{ A}_{t}$, 
it follows that for every $i=0,1,...$, 
\begin{equation*}  
 P\big(\overline \rho_{i+1} =  \overline \rho_{i}/2\big) \leq P\big(\rho_{i+1} =\rho_{i}/2\big),
\end{equation*}  
and 
\begin{equation*}
P(\overline \tau_\infty<\infty)  \leq P(\tau_\infty<\infty). 
\end{equation*}
From the preceding paragraph we conclude that it is enough to prove 
Proposition~\ref{prop-rw-hit} for $\overline{\mathcal{A}}$. 
In order to simplify notation, we will write
$(\mathcal{A}_{t}(x),  \tau_{i},\rho_{i}, \mathbb{ A}_{t})$ instead of 
$(\overline{\mathcal{A}}_{t}(x),  \overline \tau_{i}, \overline \rho_{i},  \mathbb{ \overline A}_{t})$,
when there is no ambiguity.    
 
Using scaling via Lemma \ref{lem:scaling} with $\|F\|_{\infty}$ 
in place of $F$, we find that there is a constant $p\in(0,1)$ such that,
conditioned on $\mathcal{F}_{\tau_{i-1}}$,
\begin{equation*}
\log_2\rho_i=
\begin{cases}
\log_2 \rho_{i-1}+1 & \text{with probability $p$} \\
\log_2 \rho_{i-1}-1 & \text{with probability $1-p$},
\end{cases}
\end{equation*} 
for $i=0,1,\ldots$, and the increments $\rho_{i+1}-\rho_i$ are independent.  
It follows that $\{\log_2 \rho_i\}_{i\in\mathbb{N}}$ is a 
nearest-neighbor random walk on the integers, and is biased if $p\ne1/2$. So, 
$\rho_i \to 0$ as $i \to \infty$ with probability 1 if and only if $p<1/2$.
Since the probability $p$ is the same for each time stage 
$[\tau_m,\tau_{m+1}]$, we prove that $p<1/2$ only on the first time stage 
$t\in[\tau_0,\tau_1]$, and this will finish the proof of 
Proposition \ref{prop-rw-hit}(i).

In addition to $\rho_1$, we define the maximum displacement of $\tilde{\mathbb{A}}_{\tau_1}$ in the $x_1$-direction $D_{\tau_1}$ as 
\begin{equation} \label{D-B1}
D_{\tau_1}=\sup_{x\in\partial \tilde{\mathbb{A}}_{\tau_1}}(x_1-1)
 =\rho_1+B^{(1)}_{\tau_1}-1.
\end{equation}

We are interested in finding a bound of the probability $p= P\big(\rho_{1} =2\big)$. First note that if we set $D_{\tau_1}\equiv 0$, from (\ref{D-B1}) and by the gambler's ruin, we would 
have $p=1/3$, since it is the probability that $1-B^{(1)}_t$, a 
one-dimensional Brownian motion starting at 1, hits 2 before hitting 1/2.
That is,
\begin{equation*}
p = \frac{1/2}{1/2+1} =1/3. 
\end{equation*}
Since $D_{\tau_1}> 0,$ $P$-a.s., in order to get a bound on $p$, we assume the following hypothesis, which will be verified later. For every $\delta>0$ sufficiently small,
\begin{equation}  \label{D-bound} 
P\left(D_{\tau_{1}} <\frac{1}{4} -2 \delta \right) >1-\delta.
\end{equation} 
From (\ref{D-B1}) it follows that under $\{D_{\tau_{1}} <\frac{1}{4} -2 \delta\}$,  $p$ is bounded from above by the probability that a Brownian motion, starting from $1+\frac{1}{4} -2 \delta$, exits the interval $[1/2, 2]$ from the right boundary.
It follows that 
\begin{equation*}
q \equiv 1-p \geq \frac{2-(1+\frac{1}{4} - 2\delta)}{2-(1+\frac{1}{4} - 2\delta)+|1/2-(1+\frac{1}{4} - 2\delta)|} >1/2+\delta. 
\end{equation*}
Therefore, in order to prove that $B_{t}$ hits $\mathbb{A}_{t}$, we just need to verify (\ref{D-bound}). The remainder of the proof is dedicated to deriving an inequality similar to (\ref{D-bound}). 

\textbf{Step 2: construction of a cover on the Brownian path.} 
Recall that $B_{0}=0$. In order to bound the vertical and lateral drifts we first need to fix a cover of the Brownian path $\{B^{\bot}_{t}:t\in [0,T]\}$
in $\mathbb{R}^{n-1}$, for some arbitrary $T>0$, and specify some of its properties.

In what follows 
$\lceil\cdot \rceil$ is the ceiling function which gives the smallest 
integer greater than or equal to the number inside.

\begin{proposition} \label{prop-cover-bm}
For any $\varepsilon>0$ there exists a constant $C_2$ not depending on $n$ such 
that the following holds.  If $K \in (0,1/8)$ and
$r=e^{k}$ with integer $1 \le k \le k_\infty$, where $k_\infty =\big\lceil (\log n)/2\big\rceil$, then with 
probability at least $1-\varepsilon$, we can 
cover $\{B^{\bot}_{t}:t\in [0,T]\} \subset \mathbb{R}^{n-1}$ with a number
$\big\lceil C_2\frac{nT}{Ke^{2k}} \big\rceil$ of $(n-1)$-dimensional balls of 
radius $r$.  
\end{proposition}  
We prove Proposition \ref{prop-cover-bm} in Section \ref{Section-pf-prop-cover}.  \\

Recall that $\tau_1$ was defined in \eqref{tau1-def}. We define the following event: 
\begin{equation} \label{Ev-1}  
\mathcal E_{1} = \{\tau_1 \leq T\}.
\end{equation}  
The following Lemma, which is proved in Section \ref{sec-pf-prop-lem}, helps us to bound the probability of $\mathcal E_{1}^{c}$.
\begin{lemma}  \label{lem-tau-bnd} 
There exist constants $C_{3}, C_{4} > 0$ such that   
\begin{equation*} 
P\big(\tau_1 >T\big)  \leq C_{3}\exp\left(-C_{4} T\right). 
\end{equation*}  
\end{lemma} 
Let $\varepsilon>0$ be arbitrarily small. By Lemma~\ref{lem-tau-bnd}, we can fix $T$ large enough so that 
\begin{equation}   \label{eps-1-ev} 
P(\mathcal E_{1}^{c}) < \varepsilon. 
\end{equation} 
We also fix $K \in (0,1/8)$ and let
\begin{equation}   \label{m-star} 
m_{k}^{*}(n)=\big\lceil  C_2\frac{nT}{Ke^{2k}} \big\rceil, 
\end{equation}  
where $C_{2}$ is defined as in Proposition \ref{prop-cover-bm}. 

We define $\mathcal E_{2}$ to be the event that there is a cover of 
$\{B^{\bot}_{t}:t\in [0,T]\}$ with $m_{k}^{*}(n)$ balls of of radius $e^{k}$ 
for any $k=1,...,\big\lceil (\log n)/2\big\rceil$. From Proposition 
\ref{prop-cover-bm} we have  
\begin{equation}    \label{eps-2-ev}
P(\mathcal E_{2}^{c}) < \varepsilon. 
\end{equation} 
\\
\textbf{Step 3: uniform bound on the Brownian occupation time.} In order to bound the vertical and lateral drift, we 
will bound the amount of time spent by the Brownian motion in each ball of the cover which was constructed in the previous step. 
Let $L^{(n-1)}_t(\cdot)$ be the occupation 
measure of the $n-1$ dimensional Brownian motion $B^{\bot}_{t}$. Define 
$L^{(n-1)}(\cdot) = \lim_{t \rightarrow \infty } L^{(n-1)}_{t}(\cdot)$, 
where we often omit the dependence in $n$ and write $L_t(\cdot)$ and $L(\cdot)$. 
We will need the following proposition. 
\begin{proposition} \label{prop-L-bound}
Assume that $n\geq 4$. Let $\mathbf B_{r}(0)$ be a ball of radius $r>0$ centered at the origin. Then, there exist constants $C_{5},C_{6}>0$ such that
\begin{equation*}
P\big(L^{(n-1)}\big(\mathbf B_{r}(0)\big)>sr ^{2}\big) \leq C_{5}\exp\left(-C_{6}n^2{s}\right)
\end{equation*}
for all $s > \frac{8}{n}$. 
\end{proposition} 
The proof of Proposition \ref{prop-L-bound} is given in Section \ref{sec-pf-prop-lem}. 
\\
From now on assume that $n\geq 4$. We will deal with the case where $n=2,3$ later. Assume further that $\mathcal E_{2}$ is satisfied. Denote by $\mathbf B_{i}^{k}$, $i=1,...,m^{*}_{k}(n)$, the balls of radius $e^{k}$ in the random cover of $\{B^{\bot}_{t}:t\in [0,T]\}$. 
We also write $C\cdot \mathbf{B}^{k}_i$ as the ball with the same center as $\mathbf{B}^{k}_i$ with radius $C$ times the radius of $\mathbf{B}^{k}_i$.

In what follows we fix $1 \le k \le k_\infty$, where $k_\infty =\big\lceil (\log n)/2\big\rceil$.  
Suppose that $p\in\mathbf{R}^{n-1}$, and define 
\begin{equation}
\label{eq:close-to-p}
\bar \tau(p)  = \inf\{t \geq 0 \, : \, |B_t-p|<2e^{k} \}. 
\end{equation}
Assume that $\bar \tau(p)\leq T$. 
Let $\tau_{0},\tau_1,\tau_2,\ldots$ be the times at which we choose new balls in the cover of $\{B^{\bot}_{t}:t\in [0,T]\}$.  That is, $\tau_{0} = 0$, and $\mathbf{B}_0^k$ is centered at the origin. Then, $\tau_{1}$ is the exit time from $\mathbf{B}_0^k$ and $\mathbf{B}_1^k$ is centered at $\mathbf{B}_{\tau_{1}}$. We continue in the same manner, so $\mathbf{B}_i^k$ has its center at $B_{\tau_i}$ with radius $e^k$. By our assumption $\bar \tau(p)\leq T$, and since $\mathbf{B}_i^k$ 
cover $B_{[0,T]}$, it follows that there must be a first index $i(p)$ such 
that 
\begin{equation} \label{b-tau1}
|B_{\tau_{i(p)}}-B_{\bar \tau(p)}|<e^{k}
\end{equation}
and hence
\begin{equation}  \label{b-tau2}
|B_{\tau_{i(p)}}-p|<3e^{k}.
\end{equation}
Note that $\tau_{i(p)}$ is a stopping time with respect to the Brownian 
filtration $(\mathcal{F}_t)_{t\geq0}$.  Also, we have $\tau_{i(p)}<\bar \tau (p)$ by 
the construction of our cover.  By this definition, 
$B_{t}\not\in2\mathbf{B}_{e^k}(p)$ for $t<\bar \tau(p)$, and so 
$B_{t}\not\in2\mathbf{B}_{e^k}(p)$ for $t<\tau_{i(p)}$ also.  
From (\ref{b-tau1}) and (\ref{b-tau2}), we have 
\begin{equation} \label{cont1} 
2\mathbf{B}_{e^{k}}(p)\subset5\mathbf{B}_{i(p)}^k.
\end{equation}
Next, we consider the occupation measure of the Brownian path after time 
$\tau_{i(p)}$.  Using the time shift operator $\{\theta_{t}\}_{t\geq 0}$, we define
\begin{equation*}
L^{(n-1)}_{\tau_{i(p)}}(2\mathbf{B}_{e^{k}}(p))
= \theta_{\tau_i(p)}\cdot L^{(n-1)}(2\mathbf{B}_{e^{k}}(p)).
\end{equation*}
Since $B_{t}\not\in2\mathbf{B}_{e^k}(p)$ for $t<\tau_i(p)$, it follows from (\ref{cont1}) that, 
\begin{equation} \label{ineq-L101}
\begin{aligned}
L^{(n-1)}(2\mathbf{B}_{e^{k}}(p))
&=L^{(n-1)}_{\tau_i(p)}(2\mathbf{B}_{e^{k}}(p))  \\
&\leq L^{(n-1)}_{\tau_i(p)}\big(5\mathbf{B}_{i(p)}^k\big).
\end{aligned}
\end{equation}
By the strong Markov property of Brownian motion with respect to the 
stopping time $\tau_i(p)$ and using Proposition \ref{prop-L-bound},  it follows that if $s>8/n$, then there are $C_{5},C_{6}>0$ such that 
\begin{equation} \label{ineq-L}
P\big(L^{(n-1)}_{\tau_i(p)}(5\mathbf{B}_{i(p)}^k)>25se^{2k}\big)
\leq C_{5}\exp\left(-C_{6}n^2s\right).
\end{equation}
Now let $\mathcal{A}_{k,i}$ be the event that 
\begin{equation}  \label{b1}
L^{(n-1)}_{\tau_i(p)}(5\mathbf{B}_i^k)\leq \rho\frac{e^{2k}}{n}
\end{equation}
for some positive constant $\rho$ independent of $i$ and $k$, which will be fixed later.
From (\ref{ineq-L}) we have for $\rho/(25n) > 8/n$ (that is $\rho>200$),
\begin{equation} \label{A-k-bound} 
P(A^c_{k,i})\leq C_{5}\exp\left(-C_{6}n\rho/25\right).
\end{equation}
Let us define the good event where all bounds such as (\ref{b1}) are satisfied as
\begin{equation*}
\mathcal E_{3}=\bigcap_{k=1}^{k_\infty}\bigcap_{i=1}^{m^{*}_{k}(n)} A_{k,i}.
\end{equation*} 
Note that from (\ref{b1}) and (\ref{ineq-L101}), it follows that if $\rho > 200$, then we have on $\mathcal{E}_3$  
\begin{equation}  \label{ball-sup}
\sup_{p\in\mathbf{R}^{n-1}}L^{(n-1)}(2\mathbf{B}_{e^{k}}(p))
 \leq\rho\frac{e^{2k}}{n}.
\end{equation} 
From (\ref{ball-sup}) we get that under $\mathcal E_{3}$ we have a uniform bound in $p$ on the occupation measure $L^{(n-1)}$, which is a stronger statement than the local bound in Proposition \ref{prop-L-bound}.

We now show that we can choose $\rho$ large enough, so the probability of $\mathcal E_{3}$ will be arbitrary close to 1. Recall that $k_{\infty}= \big\lceil (\log n)/2\big\rceil$. 
 From 
(\ref{A-k-bound}) and (\ref{m-star}) it follows that 
\begin{align*}
P(\mathcal E_{3}^c)& \leq \sum_{k=1}^{k_\infty}\sum_{i=1}^{m^{*}_{k}(n)}P(A_{k,i}^c) \\
&\leq  \sum_{k=1}^{k_\infty}m^{*}_{k}(n) C_{5} e^{-C_{6}n\rho/25} \\
&= \sum_{k=1}^{k_\infty}\big\lceil C_2 \frac{nT}{Ke^{2k}} \big\rceil
C_{5} e^{-C_{6}n\rho/25}.
\end{align*}
Using (\ref{m-star}), we continue with
\begin{align*}
P(\mathcal E_{3}^c)&\le C \frac{nT}{K} 
e^{-C_{6}n\rho/25}\sum_{k=1}^{k_\infty}\frac{1}{e^{2k}}\\
&\le C\frac{nT}{K} 
e^{-C_{6}n\rho/25}\left(\frac{1}{e^2}+\int_{1}^{k_\infty}\frac{1}{e^{2x}}dx\right)\\
&\le C\frac{nT}{K} 
e^{-C_{6}n\rho/25}\left(\frac{1}{e^2}+\int_{1}^{(\log n)/2+1}\frac{1}{e^{2x}}dx\right)\\
&= C\frac{nT}{K} 
 e^{-C_{6}n\rho/25}\frac{1}{2e^2}\left(3-\frac{1}{n}\right)\\
&= C'\frac{T}{K} 
 e^{-C_{6}n\rho/25}\left(3n-1\right).
\end{align*}
Note that the right hand side is positive since $n\ge1$. In order to have $P(\mathcal E_{3}^c)<\varepsilon$, we need  $\rho$ to satisfy 
\begin{equation} \label{cond-c} 
C'\frac{T}{K} 
 e^{-C_{6}n\rho/25}\left(3n-1\right)<\varepsilon. 
\end{equation} 
From the conditions on (\ref{A-k-bound}) we can choose $\rho>200$ such that (\ref{cond-c}) is satisfied. For such $\rho$, we get
\begin{equation} \label{eps-3-ev} 
P(\mathcal E_{3}^c) <\varepsilon. 
\end{equation}

In the next steps we derive an upper bound of the lateral drift (in the $x^{\perp}$--direction) and the vertical drift ($x_{1}$--direction), during the time interval $[0,\tau_1]$, on point $p$ which initially is in $\mathcal{A}_{0}$. Our bound will be independent of the initial position $p$. 

 We write 
\begin{equation*}
p=(p_1,\ldots,p_n)=(p_1,p^\perp), 
\end{equation*}
where $p^\perp=(p_2,\ldots,p_n)\in\mathbf{R}^{n-1}$.  

\textbf{Step 4: bound on the lateral drift.} We use the cover of the Brownian path on $[0,T]$ and the bounds on the Brownian occupation time which were established in the previous steps, in order to bound the lateral drift (i.e. the displacement of $p^\perp$).   
We will use the notation $\{p^\perp(t)\}_{t\geq 0}$ to denote the position of $p^\perp$ at time $t$ 
with $p^\perp(0) \in \mathcal A_0$ being its original position. 

We first define concentric balls centered at $p(0)^{\bot}$ in $\mathbb{R}^{n-1}$.  
Let $V^{k}_{p(0)}$ be a ball of radius $2e^k$ centered at $p(0)^{\bot}$ for 
$k=1,...,k_\infty$. Then, we define the set of the concentric 
annuli $A^k_{p(0)} = V^{k}_{p(0)} \setminus V^{k-1}_{p(0)} $, for $k=2,3,...$, with 
the first set equal to the first ball: $A^1_{p(0)}:=V^1_{p(0)}$. So, these annuli 
(except for the first) have inner radii $2e^k$ and outer radii $2e^{k+1}$. 

Let $D^{\perp}_k$ be the maximal lateral drift experienced by a point 
$q=(q_1,q^\perp)$ such that   
\begin{equation} \label{point-q} 
q_1\leq 0 \quad \textrm{and } q^\perp\in\mathbf{B}_{e^{k}}(p^\perp(0))
\end{equation}
assuming that the Brownian particle is located at $r=(r_1,r^\perp)$. 
Recall that by the hypothesis of Theorem \ref{theorem-hit}, $\|F\|_{\infty}\leq c^*n^{3/4}$. On the first interval $[0,{\tau_1}]$, the vertical distance (along the $x_{1}$-axis) from 
$\mathbb{A}_{t}$ to $B_t$ is on the interval $[1/2,2]$ and so it is bounded below by 1/2. The lateral distance (along $x^{\bot}$) from $B_t$ to a point $q$ in 
$\mathbb{A}_{t}$ is $\|q^{\bot}-B^{\bot}_t\|$. 
From (\ref{main_equation}) it follows that, 
\begin{equation}  \label{let-drift} 
\begin{aligned}  
D^{\perp}_k 
&\le \|F\|_\infty 
 \frac{1}{\sqrt{1/4+\|q^{\perp}-r^{\perp}\|^2}} \\ 
&\leq c^*n^{3/4} 
 \frac{1}{\sqrt{1/4+\|q^{\perp}-r^{\perp}\|^2}}.
 \end{aligned} 
 \end{equation} 
We distinguish between the following two cases: 

\emph{Case 1}:  $r^\perp \in A^1_{p(0)}$. 
Then from (\ref{let-drift}) it follows that there exists a constant $C>0$ independent of $n$ and $p$, such that 
\begin{equation} \label{D-p-1} 
D^{\perp}_{1} \leq Cc^*n^{3/4}. 
\end{equation}
\emph{Case 2}:  $r^\perp \in  A^{k+1}_{p(0)}$, for $1 \leq k \leq k_\infty$. 
Then from (\ref{let-drift}) it follows that there exists a constant $C>0$ independent of $n$ and $p$, such that 
\begin{equation} \label{D-p-k} 
\begin{aligned} 
D^{\perp}_k&\leq \|F\|_\infty\frac{1}{\sqrt{1/4+e^{2k}}} \\
&\leq Cc^*n^{3/4}e^{-k}.
\end{aligned}
\end{equation}
We define $k_{0}$ by the equation
\begin{equation*}
e^{k_0}=\left(e^{k_\infty}\right)^{1/2}=n^{1/4}.
\end{equation*}
First we assume that the process $\{p^\perp(t)\}_{t\geq 0}$ stops when it exits from 
$\mathbf{B}_{e^{k_0}}(p^\perp(0))$.  If, under this assumption, we find that 
$p^\perp(t)$ in fact does not exit from $\mathbf{B}_{e^{k_0}}(p^\perp(0))$
over time $[0,T]$, then the unmodified process must not exit either.  

We will use our estimates on the occupation measure for Brownian motion from step 3, in the 
various balls $2\mathbf{B}_{e^{2k}}(p^\perp(0))$, in order to bound the horizontal drift experienced by points in 
$\mathcal A_t$ where $t\in [0,\tau_1]$.   
We distinct between the following 3 cases: $k=1$, $1<k<k_0$, and $k_0 \leq k \leq k_\infty$.  In the 
first two cases $k<k_0$, we have that $p^\perp(t)$ traverses at most 
$e^{k_0-k}$ balls.  For $k>k_0$, we have that $p^\perp(t)$ traverses at most 
one ball.  Then, recalling that $e^{k_0}=n^{1/4}$, we have the following.  
In each term, we first write our upper bound for the drift from (\ref{D-p-1}) and (\ref{D-p-k}), then the number 
of balls traversed, and then the occupation measure $L^{n-1}$ of each ball from (\ref{ball-sup}).  At first, 
we separate these terms by dots. Let $D^\perp_{\tau_1}$ be the total horizontal drift experienced by $p(t)$ between $[0,\tau_1]$. It follows that 
\begin{align*}
D^{\perp}_{\tau_{1}} &\leq Cc^*n^{3/4}\cdot e^{k_0}\cdot
          L^{(n-1)}\big[A_{1}(p^\perp(0))\big] \\
&\qquad + \sum_{k=2}^{k_0-1}Cc^*n^{3/4}e^{-k}\cdot e^{k_0-k}\cdot
   L^{(n-1)}\big[A_{k}(p^\perp(0))\big]  \\
&\qquad + \sum_{k=k_0}^{k_\infty}Cc^*n^{3/4}e^{-k}\cdot1\cdot
   L^{(n-1)}\big[A_{k}(p^\perp(0))\big]  \\
&\leq Cc^*n^{3/4}e^{k_0}\rho\frac{1}{n}
 + Cc^*\sum_{k=0}^{k_0}n^{3/4}e^{k_0-2k}\rho\frac{e^{2k}}{n}  
 + Cc^*\sum_{k=k_0}^{k_\infty}n^{3/4}e^{-k}\rho\frac{e^{2k}}{n}  \\
&\leq Cc^*\rho n^{-1/4}e^{k_0}
 + Cc^*\sum_{k=0}^{k_0}n^{3/4}e^{k_0}\rho\frac{1}{n}  
 + Cc^*\sum_{k=k_0}^{k_\infty}n^{3/4}e^{-k}\rho\frac{e^{2k}}{n}  \\
&\leq Cc^*\rho + Cc^*\rho k_0
 + Cc^*\rho n^{-1/4}e^{k_\infty}  \\
&\leq n^{1/4},
\end{align*}
for $c^*$ small enough. Note that we used $e^{k_0}=n^{1/4}$, 
$e^{k_\infty}=n^{1/2}$, and $k_0=C\log n$ in the last inequality. 

Because $\mathbf{B}_{e^{k_0}}(p^\perp(0))$ has radius $n^{1/4}$, we find 
that for $c^*$ small enough, $p^\perp(t)$ does not exit from this ball, as 
required.  

\textbf{Step 5: bound on the downward drift.}
We use the same strategy as in Step 4 to bound 
the total downward drift.   
Let $D^{1}_k$ be the maximal downward drift experienced by a point 
$q=(q_1,q^\perp)$  as in (\ref{point-q}), 
assuming that the Brownian particle is located at $r=(r_1,r^\perp)$. As in step 4, on the first interval $[0,{\tau_1}]$, the vertical distance (along the $x_{1}$-axis) from 
$\mathbb{A}_{t}$ to $B_t$ is on the interval $[1/2,2]$ and so it is bounded below by 1/2. The lateral 
distance (along $x^{\bot}$) from $B_t$ to $q$ is $\|q^{\bot}-r^{\bot}\|$.  We want to compute an upper bound of the 
vertical drift on the point $q$, so we multiply the factor 
$ \frac{1}{\sqrt{1/4+\|q^{\bot}-r^\bot \|^2}}$, which bounds the projection of 
the drift vector from $B_t$ to  $q$  onto the $x_{1}$--direction, to the diagonal drift. We get that 
\begin{equation}  \label{hor-drift} 
\begin{aligned}  
D^{1}_k 
& \le \|F\|_\infty 
 \frac{1}{\sqrt{1/4+\|q^{\perp}-r^{\perp}\|^2}}\cdot \frac{1}{\sqrt{1/4+\|q^{\perp}-r^{\perp}\|^2}} \\ 
&\leq c^*n^{3/4} 
 \frac{1}{1/4+\|q^{\perp}-r^{\perp}\|^2}.
 \end{aligned} 
 \end{equation} 
We again distinguish between the following cases:

\emph{Case 1}:  $r^\perp \in A^1_{p(0)}$. 
Then from (\ref{hor-drift}) it follows that there exists a constant $C>0$ independent of $n$ and $p$, such that 
\begin{equation} \label{D-1-1} 
D^{1}_{1}\leq  \|F\|_\infty\leq Cc^*n^{3/4}. 
\end{equation}
\emph{Case 2}:  $r^\perp \in  A^k_{p(0)}$, for $2 \leq k \leq k_\infty$. 
Then from (\ref{hor-drift}) it follows that there exists a constant $C>0$ independent of $n$ and $p$, such that 
\begin{equation}
\begin{aligned} \label{D-1-k} 
D^{1}_k&\leq c^*n^{3/4}\frac{1}{{1/4+e^{2k}}} \\
&\leq Cc^*n^{3/4}e^{-2k}.
\end{aligned}
\end{equation} 
Let $D^{1}_{\tau_{1}}$ be the total vertical drift experienced by $p(t)$ between $[0,\tau_1]$. As in the previous step, in each term, we first write our upper bound for the drift from (\ref{D-1-1}) and (\ref{D-1-k}), then the number of balls traversed, and then the occupation measure of each ball from (\ref{ball-sup}). It follows that 
\begin{align*}
D^{1}_{\tau_{1}} &\leq Cc^*n^{3/4}\cdot e^{k_0}\cdot
          L^{(n-1)}\big[A_{1}(p^\perp(0))\big] \\
&\qquad + \sum_{k=2}^{k_0-1}Cc^*n^{3/4}e^{-2k}\cdot e^{k_0-k}\cdot
   L^{(n-1)}\big[A_{k}(p^\perp(0))\big]  \\
&\qquad + \sum_{k=k_0}^{k_\infty}Cc^*n^{3/4}e^{-2k}\cdot1\cdot
   L^{(n-1)}\big[A_{k}(p^\perp(0))\big]  \\
&\leq Cc^*n^{3/4}\cdot n^{1/4}\cdot\rho\frac{1}{n}
 + Cc^*\sum_{k=0}^{k_0}n^{3/4}e^{k_0-3k}
        \rho\frac{e^{2k}}{n}  \\
&\qquad + Cc^*\sum_{k=k_0}^{k_\infty}n^{3/4}e^{-2k}
        \rho\frac{e^{2k}}{n}  \\
&\leq Cc^*\rho
 + Cc^*\rho n^{-1/4}e^{k_0}\sum_{k=0}^{k_0}e^{-k}
 + Cc^*\rho n^{-1/4}k_\infty  \\
&\leq Cc^*\rho
 + Cc^*\rho n^{-1/4}e^{k_0}
 + Cc^*\rho n^{-1/4}k_\infty. 
\end{align*}
Since $e^{k_0}=n^{1/4}$ and $k_\infty=(\log n)/2$, we have
\begin{equation*}
D \leq Cc^*.
\end{equation*}

By choosing a small $c^*$ we get  
\begin{equation} \label{D1-8} 
D^{1}_{\tau_{1}} \leq 1/8,
\end{equation}
on the event $\cap_{i=1}^{3}\mathcal E_{i}$. Recall $D_{\tau_{1}}$ was defined in (\ref{D-B1}). From (\ref{D1-8}),   
 (\ref{eps-1-ev}), (\ref{eps-2-ev}) and (\ref{eps-3-ev}), we get (\ref{D-bound}) and we finish the proof for $n>3$.  
\\

\textbf{Step 6: bound $D_{\tau_{1}}$ for $n=2,3$.}  In this case we can bound the total downward drift 
at $x$ due to $B^\perp_{[0,T]}$ by $c^{*}C_{7}$, where $C_{7}$ is some positive constant, since the 
Brownian motion is at a positive distance from $x$ on $[0,\tau_{1}]$. Again, 
by choosing $c^*$ small enough, we get  
\begin{equation*}
 D_{\tau_{1}} <c^{*}C_{7}T \leq 1/8
\end{equation*}
on the event $\mathcal E_{1}$. From the above equation and (\ref{eps-1-ev}) we get (\ref{D-bound}) and we finish the proof for $n=2,3$.
\end{proof}  

\section{Proof of Proposition \ref{prop-cover-bm}} \label{Section-pf-prop-cover}

Recall that our goal is to get an upper bound, with a high probability, on the number of 
balls of radius $r$ needed to cover the Brownian path $B^{\bot}_{[0,T]}$.   

Let $\tau_{r,1}$ be the first time $t$ that the $(n-1)$-dimensional 
Brownian motion $B^{\bot}_{t}$ initiated at the origin exits the ball $B(0,r)$ centered at $0$ of radius $r$.  
At $\tau_{r,1}$, we start the Brownian motion from the origin over again and wait till the next time $\tau_{r,2}$ at which the restarted Brownian motion reaches distance $r$ 
from its starting point.  We also write $\tau_{r,0}=0$.  Let
\begin{equation*}
\sigma_{i}=\sigma_{r,i}=\tau_{r,i}-\tau_{r,i-1}.
\end{equation*}
Then $\sigma_i:i=1,2,\ldots$ are i.i.d. random variables. 

Let $N=N_{r,T}$ be the number of balls of radius $r$ needed to cover the 
Brownian path $B^\perp_{[0,T]}$.  By Brownian scaling, $N$ equals in law to the 
number of balls of radius 1 needed to cover the Brownian path $B^\perp_{[0,T/r^2]}$.  
So we can redefine $\sigma_i$ and $N$ to reflect this Brownian scaling, that 
is, $\sigma_i$ are i.i.d. exit times from a unit ball.  

We expect $\sigma_i$ to be about $C/n$, since the Bessel process $\|B^{\bot}_t\|$ has 
drift about $n$ for $1/2\leq\|B^{\bot}_t\|\leq1$ and larger when $\|B^{\bot}_t\|\leq1/2$.    
So, roughly speaking we expect,  
\begin{equation}
\label{eq:hope-show}
N\approx \frac{nT}{r^2}. 
\end{equation}
The following lemma is an important ingredient in the proof of Proposition \ref{prop-cover-bm}. 
\begin{lemma}  \label{lemma-exit-t}
For any $m\geq 1$, there exist constants $C_{8},C_{9}>0$ such that 
\begin{equation*}
P\left(\inf_{i=1,\ldots,m}\sigma_i<K/n\right)
\leq mC_{8}\exp\left(-C_{9}n/K\right)
\end{equation*} 
for all $0< K <1/8$. 
\end{lemma}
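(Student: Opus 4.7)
The plan is to bound the single-exit probability $P(\sigma_1<K/n)$ by an exponential factor of the form $C_8\exp(-C_9 n/K)$, after which the lemma follows immediately from a union bound over the i.i.d.\ times $\sigma_1,\ldots,\sigma_m$.

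The key tool is an It\^o decomposition of the squared radial process. Since $B^\perp_t$ is an $(n-1)$-dimensional Brownian motion started at the origin, It\^o's formula gives
\[
\|B^\perp_t\|^2=(n-1)t+M_t,\qquad M_t=2\int_0^t B^\perp_s\cdot dB^\perp_s,
\]
where $M_t$ is a continuous martingale with quadratic variation $\langle M\rangle_t=4\int_0^t\|B^\perp_s\|^2\,ds$. On the event $\{\sigma_1<K/n\}$ the path $B^\perp$ stays inside the unit ball up to time $\sigma_1$, which yields the deterministic bound $\langle M\rangle_{\sigma_1}\leq 4\sigma_1<4K/n$. Meanwhile $\|B^\perp_{\sigma_1}\|^2=1$ at the exit time, forcing
\[
M_{\sigma_1}=1-(n-1)\sigma_1\geq 1-K\geq 7/8,
\]
using the standing assumption $K<1/8$. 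In particular, the stopped martingale $N_t:=M_{t\wedge\sigma_1\wedge(K/n)}$ satisfies $\sup_t N_t\geq 7/8$ on $\{\sigma_1<K/n\}$ together with the almost-sure bound $\langle N\rangle_\infty\leq 4K/n$.

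I then invoke the standard Bernstein-type exponential inequality for continuous martingales with bounded quadratic variation (provable by optimizing the exponential supermartingale $\exp(\theta N_t-\tfrac12\theta^2\langle N\rangle_t)$): if $\langle N\rangle_\infty\leq v$ a.s., then $P(\sup_t N_t\geq\lambda)\leq\exp(-\lambda^2/(2v))$. Applied with $\lambda=7/8$ and $v=4K/n$, this gives
\[
P(\sigma_1<K/n)\leq \exp\!\left(-\frac{(7/8)^2\, n}{8K}\right),
\]
and a union bound over $i=1,\ldots,m$ delivers the lemma with constants $C_8=1$ and $C_9=49/512$.

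The main technical subtlety is keeping the martingale supremum and its quadratic variation simultaneously controlled on a single event, since the natural bound $\langle M\rangle_t\leq 4t$ only holds while the path remains in the unit ball. Stopping $M$ at $\sigma_1\wedge(K/n)$ resolves this cleanly: the threshold $M\geq 7/8$ and the bound $\langle M\rangle\leq 4K/n$ then hold together on $\{\sigma_1<K/n\}$. Uniformity of $C_9$ in $n$ is automatic because $(n-1)K/n\leq K$, so the lower bound $1-K\geq 7/8$ does not degrade with dimension.
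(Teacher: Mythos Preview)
Your proof is correct, but it follows a different line from the paper's. Both arguments begin with the union bound and reduce to estimating $P(\sigma_1<K/n)$. The paper then passes to the Bessel process $Y_t=\|B^\perp_t\|$, dominates $\sigma_1$ by the exit time $\tilde\sigma$ of $Y$ from the interval $[1/2,1]$ started at $3/4$, observes that on this interval the drift $(n-2)/(2Y_t)$ is at most $n-2$ so that its contribution over time $K/n$ is below $K<1/8$, and concludes that $\{\tilde\sigma<K/n\}$ forces the one-dimensional driving Brownian motion to move at least $1/8$, whence the Gaussian tail. You instead apply It\^o's formula to $\|B^\perp_t\|^2=(n-1)t+M_t$ and control the stopped martingale $M_{t\wedge\sigma_1\wedge(K/n)}$ directly via the Bernstein exponential inequality. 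Your route is arguably cleaner: it avoids the auxiliary ``start from $3/4$'' stochastic-domination step and the Bessel SDE altogether, and it yields explicit constants $C_8=1$, $C_9=49/512$. The paper's route, by contrast, stays entirely within one-dimensional processes and uses only the reflection principle rather than the exponential-martingale machinery.
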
 
\begin{proof} 
Note that for any $K>0$ and $m\geq 1$ we have, 
\begin{equation*}
P\left(\inf_{i=1,\ldots,m}\sigma_i<K/n\right)
\leq mP\left(\sigma_1<K/n\right). 
\end{equation*}
Hence we need to show that $mP(\sigma_1<K/n)$ is small enough.
 
For simplicity, we let $\sigma=\sigma_1$.  Recall that the Bessel process $Y_t=\|B^{\bot}_t\|$ satisfies the following SDE
\begin{equation*}
dY_{t}=\frac{n-2}{2Y_{t}}dt+dW_{t}, 
\end{equation*}
where $W_{t}$ is a standard one dimensional Brownian motion. 

Note that $\sigma$ is greater than $\tilde{\sigma}$, which is the time 
needed for $Y_t$ to exit the interval $[1/2,1]$ starting from $Y_0=3/4$. 
Therefore, it suffices to bound
\begin{equation*}
mP\left(\tilde{\sigma}<K/n\right).
\end{equation*}
Suppose now that
\begin{equation*}
K < 1/8
\end{equation*}
so that within time $t<K/n$, the drift term in $Y_t$ is at most $1/8$.  
Then, in order for $\tilde{\sigma}<K/n$, we must have 
\begin{equation*}
\sup_{0\leq t\leq K/n}|W_t|\geq 1/8.
\end{equation*}
However, a standard Brownian estimate gives
\begin{align*}
P\left(\sup_{0\leq t\leq K/n}|W_t|\geq 1/8\right)
\leq C_{8}\exp\left(-C_{9}n/K\right), 
\end{align*}
where $C_{8},C_{9}$ are positive constants. 

It follows that 
\begin{equation*}
P\left(\inf_{i=1,\ldots,m}\sigma_i<K/n\right)
\leq mC_{8}\exp\left(-C_{9}n/K\right).
\end{equation*}
\end{proof} 

\paragraph{\textbf{Proof of Proposition \ref{prop-cover-bm}}} \label{sec-pf-prop-L-bound}
Recall that $r=e^{k}$, $k=1,...,k_\infty$, so we are covering the Brownian path with balls of radius $e^{k}$.  \\
By a scaling argument this is equivalent to a cover the path of $B^{\bot}$, between $0$ to $T/r^2=Te^{-2k}$ with balls of radius $1$. 
Note that if $\sigma_i\geq K/n$ for all $i=1,2,..$, then we will need
\begin{equation*}
m^*=m^*(n)=
\big\lceil\frac{nT}{Ke^{2k}} \big\rceil,
\end{equation*}
such balls. Let $0< K <1/8$. From Lemma \ref{lemma-exit-t} it follows that the probability to cover the path with $m^{*}$ balls of radius $1$ is
\begin{equation*}
P_{k}:=P\left(\inf_{i=1,\ldots,m*}\sigma_i<K/n\right)
\leq C_{8} \big\lceil\frac{nT}{Ke^{2k}} \big\rceil
\exp\left(-C_{9}n/K\right).
\end{equation*}
The sum of the probabilities $P_k$ over $k$ is bounded by
\begin{align*} 
\sum_{k=1}^{\lceil \frac12\log n \rceil	}P_{k} &\leq C_{8}\exp\left(-C_{9}n/K\right) \sum_{k=1}^{\lceil	 \frac12\log n \rceil	} \big\lceil\frac{nT}{Ke^{2k}} \big\rceil
 \\
&\leq  \widetilde C_{8}\frac{n T}{K}\exp\left(-C_{9}n/K\right).
\end{align*} 
It follows that for any $\varepsilon \in (0,1)$, there exists $N(\varepsilon,K)$ such that if $n>N(\varepsilon,K)$ then for any choice of $r=e^{k}$, $k=1,....\lceil	(\log n)/2\rceil	$, we have $\sigma_i \geq K/n$ for all $i=0,...,m^{*}$ with probability $1-\varepsilon$. From the preceding paragraph we get the result for  $n>N(\varepsilon,K)$. 

Note that similar covers in dimensions $n \leq N(\varepsilon,K)$ will hold by projection of the covers when $n > N(\varepsilon,K)$, possibly with an additional multiplicative constant in $m_{k}^{*}(n)$. We therefore proved the result for all $n\geq 1$.\qed 

\section{Proof of Proposition \ref{prop-L-bound} and Lemma \ref{lem-tau-bnd}} \label{sec-pf-prop-lem} 
Before we prove Proposition \ref{prop-L-bound}, we recall 
Theorem 2 (in page 444) from Ciesielski and Taylor
\cite{ciesielski-taylor62}.  
\begin{theorem}[Paraphrased] \label{C-T-thm} 
\label{th:ct}
Suppose $n\geq1$.  
Let $\tau_n$ be the first time that an $n$ dimensional Brownian motion leaves a ball 
of radius 1  in $\mathbb{R}^n$.  Let $L_n$ be the amount of time that the same Brownian motion spends in 
this ball.  Then
\begin{equation*}
\tau_n\stackrel{\mathcal{D}}{=}L_{n+2}.
\end{equation*}
\end{theorem}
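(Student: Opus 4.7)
My plan is to prove the identity in distribution by computing the Laplace transforms $E[e^{-\lambda \tau_n}]$ and $E[e^{-\lambda L_{n+2}}]$ separately via the Feynman--Kac formula, and showing that both reduce to the same expression in terms of modified Bessel functions. The first simplification is to use spherical symmetry: both $\tau_n$ and $L_{n+2}$ depend on the Brownian path only through its radial component, so I will work with Bessel processes of index $n$ and $n+2$ throughout. I assume the Brownian motion starts at $0$ (the ambiguity in the statement should be resolved this way, since that is the classical Ciesielski--Taylor setup).

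For $\tau_n$, Feynman--Kac applied to the radial function $u_n(r) = E_r[e^{-\lambda \tau_n}]$ gives the ODE
\begin{equation*}
\tfrac{1}{2}u_n''(r) + \tfrac{n-1}{2r}u_n'(r) = \lambda u_n(r), \qquad r \in (0,1),
\end{equation*}
with $u_n(1)=1$ and boundedness at the origin. The bounded solution is $u_n(r) = C\, r^{1-n/2} I_{n/2-1}(\sqrt{2\lambda}\, r)$, and after normalizing and taking the $r \to 0$ limit using $I_\nu(z) \sim (z/2)^\nu/\Gamma(\nu+1)$, I obtain a clean formula of the form $E[e^{-\lambda \tau_n}] = (\sqrt{\lambda/2})^{n/2-1} / [\Gamma(n/2) I_{n/2-1}(\sqrt{2\lambda})]$.

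For $L_{n+2}$, I use Feynman--Kac with the indicator potential $\mathbf 1_{\mathbf B_1}$: the function $v(x) = E_x[e^{-\lambda L_{n+2}}]$ satisfies $\tfrac12\Delta v = \lambda v$ inside the unit ball, the Laplace equation $\tfrac12\Delta v = 0$ outside, and $v\to 1$ at infinity (using transience of Brownian motion in dimension $n+2 \geq 3$). Radially, the interior solution is again a modified Bessel function $r^{1-(n+2)/2} I_{n/2}(\sqrt{2\lambda}\, r)$ (bounded at $0$), while the exterior solution is $1 - b r^{-n}$. Matching $v$ and $v'$ at $r=1$ determines the constants explicitly, and evaluating at $r=0$ gives another formula for $E[e^{-\lambda L_{n+2}}]$ in terms of $I_{n/2}$ and $I_{n/2}'$ at $\sqrt{2\lambda}$.

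The final and hardest step is to massage the matching relations and show that the two expressions coincide. This comes down to the Bessel recurrences
\begin{equation*}
\tfrac{d}{dz}\bigl[z^{-\nu} I_\nu(z)\bigr] = z^{-\nu} I_{\nu+1}(z), \qquad I_\nu'(z) = I_{\nu-1}(z) - \tfrac{\nu}{z} I_\nu(z),
\end{equation*}
which convert the derivative $I_{n/2}'(\sqrt{2\lambda})$ appearing in the matching condition for $L_{n+2}$ into a combination of $I_{n/2-1}$ and $I_{n/2}$, and eventually into the expression in Step~2. The main obstacle I anticipate is bookkeeping with the Bessel identities and correctly handling the singular behavior at the origin for the Bessel ODE when $n=1$; all the other pieces (spherical symmetry, Feynman--Kac, solving linear ODEs, matching at the boundary) are routine. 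Once the two Laplace transforms are seen to match for all $\lambda > 0$, uniqueness of Laplace transforms for positive random variables gives $\tau_n \stackrel{\mathcal D}{=} L_{n+2}$.
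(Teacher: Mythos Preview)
The paper does not prove this theorem; it is simply quoted from Ciesielski and Taylor (1962, Theorem~2) as a known result and then applied in the proof of Proposition~\ref{prop-L-bound}. So there is no ``paper's own proof'' to compare against.

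That said, your Laplace-transform approach is a correct way to establish the classical identity. The computations you outline go through: for $\tau_n$ one gets $E[e^{-\lambda\tau_n}] = (\lambda/2)^{\nu/2}/\bigl[\Gamma(\nu+1) I_\nu(\sqrt{2\lambda})\bigr]$ with $\nu=n/2-1$, and for $L_{n+2}$ the $C^1$-matching at $r=1$ between the interior solution $A\,r^{-n/2}I_{n/2}(\sqrt{2\lambda}\,r)$ and the exterior solution $1-br^{-n}$ yields, after taking $r\to 0$, an expression whose equality with the previous one reduces exactly to the standard recurrence
\[
I_{\nu-1}(z) - I_{\nu+1}(z) = \frac{2\nu}{z}\,I_\nu(z),\qquad \nu=\tfrac{n}{2},\ z=\sqrt{2\lambda}.
\]
Your worry about $n=1$ is not a real obstacle: the index $\nu=-1/2$ gives $I_{-1/2}(z)=\sqrt{2/(\pi z)}\cosh z$, and the small-$z$ asymptotics $I_\nu(z)\sim (z/2)^\nu/\Gamma(\nu+1)$ remain valid for all $\nu>-1$. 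For comparison, the original Ciesielski--Taylor argument computes both distributions as explicit series over zeros of Bessel functions and matches them term by term; your route via Laplace transforms is a cleaner modern variant of the same idea.
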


\paragraph{\textbf{Proof of Proposition \ref{prop-L-bound}}} 
Recall that $B^{\bot}$ is an $n-1$ dimensional Brownian motion. From Theorem \ref{C-T-thm} we have for dimensions $n\geq 4$, 
\begin{align}
\label{eq:ct}
P(L_{n-1}>s)=P(\tau_{n-3}>s).
\end{align}
We will derive an upper bound on the right side of (\ref{eq:ct}).  For notational convenience, we will first establish  an upper bound on $P(\tau_n>s)$ as a function of $n$ and replace $n$ by $n-3$ later.

First of all, note that this bound 
only depends on $Y(t)=\|B(t)\|$ for $Y\in(0,1]$. In this case, the drift of $Y$ 
satisfies 
\begin{equation} \label{drift-comp} 
\frac{n-1}{2Y(t)}\geq\frac{n-1}{2}.
\end{equation}
Let 
\begin{equation*}
X(t)=\frac{(n-1)t}{2}+W(t),
\end{equation*}
where $W(t)$ is a one dimensional standard Brownian motion.

From (\ref{drift-comp}) and a standard comparison result (see Theorem 1.1 in Chapter V.1 of Ikeda and Watanabe \cite{iw89}), we compare the drifts of $Y$ and $X$, we see that $Y(t)>X(t)$ for $0\leq t\leq\tau_n$ with probability 1. Note that the result of Ikeda and Watanabe does not cover locally unbounded drift, but this could be argued via a 
standard truncation argument.   

Therefore, from (\ref{eq:ct}) we get, 
\begin{equation*}
P(L_{n+2}>s)\leq P(\tau^X_n>s),
\end{equation*}
where $\tau^X_n$ denotes the first time $t$ that $X_t=1$.  

Simple calculations give us 
\begin{align*}
P\big(\tau^X_n>s\big)&\leq P\big(X(s)\leq1\big)   \\
&=P\big((n-1)s/2+W(s)\leq1\big)  \\
&=P\big(W(s)\leq1 -(n-1)s/2\big).
\end{align*}
Note that for $s>4/(n-1)$, 
\begin{equation*}
1-\frac{(n-1)s}{2} \leq -\frac{(n-1)s}{4}.
\end{equation*}
So, we conclude that 
\begin{align}
\label{L-tail} 
P(L_{n+2}>s) &\le P\big(\tau^X_n>s\big)  \\
&\leq P\Big(W(s)\leq -\frac{(n-1)s}{4}\Big)  \nonumber\\
&\leq C_{5}'\exp\left(-C_{6}'n^2{s}\right)\nonumber
\end{align}
for some constants $C_{5}', C_{6}'$. Replacing $n$ by $n-3$  only changes the constants
$C_{5}', C_{6}'$ in (\ref{L-tail}) but nothing else. 
 \qed 

\subsection{Proof of Lemma \ref{lem-tau-bnd}}
Recall that we must show that there exist constants $C_{3},C_{4}>0$ such 
that   
\begin{equation*} 
P\big(\tau_1 >T\big)  \leq C_{3}\exp\left(-C_{4} T\right). 
\end{equation*}  

We distinguish between the following two cases.  

\textbf{Case 1.} $F\equiv0$. Then $P\big(\tau_{1} >T\big) \leq P\big( \sup_{t \in [0,T]} |B^{(1)}_{t}| \leq 2\big)$, and the result follows by a standard Gaussian tail estimate. 

\textbf{Case 2.} $\|F\|_{\infty}>0$. 
Note that $\rho_{t}$, which is the maximal vertical distance between $\mathcal A_{t}$ and $W_{t}$, is given by
\begin{equation*}
D_{t} =\rho_t+B^{(1)}_{t}-1, 
\end{equation*}
where $D_{t}$ is the maximal vertical drift accumulated up to time $t$. From (\ref{main_equation}) with $\|F\|_{\infty}$ instead of $F$ and (\ref{tau1-def}) we get
\begin{equation*}
D_{t} \geq \frac{t}{2}\|F\|_{\infty} \quad \textrm{for all } 0\leq t\leq \tau_{1}. 
\end{equation*}
It follows that 
\begin{equation*}
\rho_{t}\geq 1+\frac{t}{2}\|F\|_{\infty} + W_{t} \quad \textrm{for all } 0\leq t\leq \tau_{1}, 
\end{equation*}
where $W_{t}$ is a one dimensional Brownian motion. 
Let $Z$ be a standard Gaussian random variable. It follows that there exist $C_{3},C_{4}>0$ independent from $n$ such that
\begin{align*}
P\big(\tau_{1} >T\big) & \leq  P\Big(\sup_{0\leq t\leq T}\rho_{t} \leq 2\Big) \\
&\leq P\Big(1+\frac{1}{2}\|F\|_{\infty}T + W_{T}  \leq 2\Big)   \\
&= P\Big(Z \leq \frac{1}{\sqrt T}  - \frac{1}{2}\|F\|_{\infty} \sqrt{T} \Big)   \\
&\leq C_{3}\exp\left(-C_{4}T\right).
\end{align*}
\qed

\section{Proof of Theorem \ref{theorem-hit2}}

We use our usual notation for a point $a\in\mathbb{R}^n$, that is 
$a=(a_1,\ldots,a_n)$ and $a^\perp=(a_2,\ldots,a_n)$.  Let 
$\mathbf{B}^{(n-1)}_r(a^\perp)$ be a ball in $\mathbb{R}^{n-1}$ of radius $r$ 
and center $a^\perp$, for $a\in\mathbb{R}^n$.  

Let $\mathbb{F}$ denote the set of Lipschitz functions 
$F:\mathbb{R}^n\to\mathbb{R}$ satisfying the hypotheses of Theorems 
\ref{theorem-hit} and \ref{theorem-hit2}, namely $0\leq F(x) \leq c^*n^{3/4}$ for 
all $x\in\mathbb{R}^n$.  The following Lemma will help us to prove Theorem \ref{theorem-hit2}. 
\begin{lemma}
\label{lem:uniform-in-F}
Let
\begin{equation*}
\mathcal{A}^{F,m}_0=\{1\}\times\mathbf{B}^{(n-1)}_m\big(0\big)
\end{equation*} 
and let $\mathcal{A}_t^{F,m}$ be the image of $\mathcal{A}_0^{F,m}$ under 
our flow $\psi_t^{\mathcal{A}^{F,m}_0}$.  Then
\begin{equation*}
\lim_{m\to\infty}\inf_{F\in\mathbb{F}}P(B_t\text{ hits }\mathcal{A}_t^{F,m})=1.
\end{equation*}
\end{lemma}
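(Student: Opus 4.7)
The plan is to deduce Lemma~\ref{lem:uniform-in-F} from Theorem~\ref{theorem-hit} by showing that, for the half-space $\{x_1\geq 1\}$, the point on the original boundary $\{x_1=1\}$ whose flow image coincides with $B_\tau$ lies in a bounded lateral region with high probability, uniformly in $F\in\mathbb{F}$. Since $\mathcal{A}_0^{F,m}\subset\{x_1\geq 1\}$, the containment property \eqref{eq:subset-property} yields that $\psi^{\mathcal{A}_0^{F,m}}$ and $\psi^{\{x_1\geq 1\}}$ agree on $\mathcal{A}_0^{F,m}$ up to the hitting time of the larger set. Theorem~\ref{theorem-hit} gives an almost surely finite first hitting time $\tau$ for the half-space; since $B_0\notin\{x_1\geq 1\}$ and $\psi_\tau$ is a homeomorphism, $B_\tau$ must lie on the moving boundary $\psi_\tau(\{x_1=1\})$, so there exists $x^*(\omega)\in\{x_1=1\}$ with $\psi_\tau(x^*)=B_\tau$. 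If $\|x^{*,\perp}\|\leq m$, then $x^*\in\mathcal{A}_0^{F,m}$ and $B$ hits $\mathcal{A}_t^{F,m}$; hence it suffices to show $\|x^{*,\perp}\|\leq R(\varepsilon,n)$ with probability $\geq 1-\varepsilon$, uniformly in $F\in\mathbb{F}$.

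Integrating the lateral component of \eqref{main_equation} along the trajectory of $x^*$ yields
\begin{equation*}
x^{*,\perp} \;=\; B_\tau^\perp \;-\; \int_0^\tau \frac{F\bigl(\|\psi_s(x^*)-B_s\|\bigr)}{\|\psi_s(x^*)-B_s\|}\bigl(\psi_s(x^*)-B_s\bigr)^\perp\,ds,
\end{equation*}
so $\|x^{*,\perp}\|\leq \|B_\tau^\perp\|+D^\perp_{\mathrm{tot}}$, where $D^\perp_{\mathrm{tot}}$ is the total lateral drift accumulated by $x^*$. I would first bound $\tau$ uniformly in $F$: because $F\geq 0$ only pushes the boundary further from $B$ vertically, on $[0,\tau_1]$ the top gap satisfies $\rho_s\geq 1-B^{(1)}_s$, so $\tau_1$ is dominated by the Brownian first-passage time of $B^{(1)}$ to $-1$, whose Gaussian tail is $F$-free. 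Iterating stage by stage via the scaling Lemma~\ref{lem:scaling} and combining with the geometric decay $\sum\rho_i<\infty$ from Proposition~\ref{prop-rw-hit}(i), I obtain $P(\tau>T)<\varepsilon/3$ for some $T=T(\varepsilon,n)$ independent of $F$; on $\{\tau\leq T\}$, a standard Gaussian tail estimate for $B^\perp$ gives $\|B_\tau^\perp\|\leq R_1(\varepsilon,n)$ with probability $\geq 1-\varepsilon/3$.

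For $D^\perp_{\mathrm{tot}}$, I would invoke Step~4 of the proof of Theorem~\ref{theorem-hit}, which establishes $D^\perp_{\tau_1}\leq n^{1/4}$ in the first stage (for $c^*$ small enough). By Lemma~\ref{lem:scaling}, the lateral drift accumulated in stage $i$ is bounded by $\rho_{i-1}n^{1/4}$, so $D^\perp_{\mathrm{tot}}\leq n^{1/4}\sum_{i\geq 1}\rho_{i-1}$. The random-walk comparison in Step~1 of Proposition~\ref{prop-rw-hit}'s proof yields $\log_2\rho_i\leq S_i$, where $S_i$ is a random walk with $P(+1)\leq p^*<1/2$ uniformly in $F\in\mathbb{F}$ (since this bias depends on $F$ only through the bound $\|F\|_\infty\leq c^*n^{3/4}$). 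Consequently $\sum\rho_i$ is tight uniformly in $F$, and we get $D^\perp_{\mathrm{tot}}\leq R_2(\varepsilon,n)$ with probability $\geq 1-\varepsilon/3$. Combining the two estimates, $\|x^{*,\perp}\|\leq R_1(\varepsilon,n)+R_2(\varepsilon,n)=:R(\varepsilon,n)$ with probability $\geq 1-\varepsilon$; choosing $m>R(\varepsilon,n)$ finishes the proof.

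The main obstacle is checking that every ingredient inherited from Theorem~\ref{theorem-hit}---the biased random walk parameter, the stage-wise lateral drift bound of Step~4, and the tail of the hitting time---can be made genuinely uniform in $F\in\mathbb{F}$, including the degenerate boundary case $F\equiv 0$. This should hold because those ingredients depend on $F$ only through $\|F\|_\infty\leq c^*n^{3/4}$, but the bookkeeping to verify uniformity cleanly across all iteration steps, in particular the tightness of $\sum\rho_i$ when the step bias $p^*$ is close to $1/2$, is the delicate part of the argument.
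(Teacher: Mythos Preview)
Your approach is genuinely different from the paper's, and the difference matters. You try to locate the specific pre-image $x^*\in\{x_1=1\}$ with $\psi_\tau(x^*)=B_\tau$ and bound $\|x^{*,\perp}\|$ by $\|B_\tau^\perp\|$ plus the total lateral drift $D^\perp_{\mathrm{tot}}$ accumulated along that trajectory, summing the Step~4 estimate of Proposition~\ref{prop-rw-hit} over all stages. The paper instead makes a single geometric observation that avoids tracking $x^*$ altogether: as long as $\|B_t^\perp\|<m$, the drift acting on any point of the disc's boundary $\partial\mathcal{A}_t^{F,m}$, when projected onto the radial direction in $\mathbb{R}^{n-1}$, points away from the origin (a one-line Cauchy--Schwarz check, since $\|\psi_t(x)^\perp\|\geq m>\|B_t^\perp\|$). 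Hence the lateral projection of $\mathcal{A}_t^{F,m}$ always contains $\mathbf{B}_m^{(n-1)}(0)$, and if $B_t^\perp$ stays in that ball up to the (uniformly controlled) half-space hitting time $\sigma^F$, the Brownian particle must hit the disc's image. Both arguments need the uniform-in-$F$ tail bound on the half-space hitting time, which the paper obtains by noting that the proof of Theorem~\ref{theorem-hit} depends on $F$ only through $\|F\|_\infty\leq c^*n^{3/4}$.

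Your route has a real gap at the summation step. The Step~4 bound $D^\perp_{\tau_1}\leq n^{1/4}$ holds only on the good event $\mathcal{E}_1\cap\mathcal{E}_2\cap\mathcal{E}_3$, whose complement has probability at most $3\varepsilon$; moreover the choice of $c^*$ in Theorem~\ref{theorem-hit} is tied to the constants (in particular $\rho$ and $T$) that depend on this $\varepsilon$. So for a fixed $c^*$ you cannot shrink $\varepsilon$ stage by stage to make a Borel--Cantelli argument work, and a straight union bound over infinitely many stages gives nothing. The paper's outward-drift observation sidesteps this entirely. If you want to rescue your approach, discard Step~4 and use instead the crude deterministic bound on the lateral drift in stage $i$: since the vertical gap lies in $[\rho_{i-1}/2,2\rho_{i-1}]$, the lateral drift rate is at most $2\|F\|_\infty/\rho_{i-1}$, giving $D^\perp_{\text{stage }i}\leq 2\|F\|_\infty\,\rho_{i-1}\,\xi_i$ with $\xi_i:=(\tau_i-\tau_{i-1})/\rho_{i-1}^2$. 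In the $\overline{F}$ setup the $\xi_i$ are i.i.d.\ with the exponential tail of Lemma~\ref{lem-tau-bnd} (independent of $F$), and since $\rho_i=2^{S_i}$ with a random walk of strictly negative drift, $\sum_i\rho_{i-1}\xi_i$ is tight. That repairs the argument, but at the cost of considerably more bookkeeping than the paper's two-line geometry.
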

Note that Theorem \ref{theorem-hit} implies that the above limit is 1 for 
$F\in\mathbb{F}$ fixed.  Lemma \ref{lem:uniform-in-F} states that this 
convergence is uniform in $F\in\mathbb{F}$.  

\begin{proof}[Proof of Lemma \ref{lem:uniform-in-F}]
Let $(\psi^F_t,B_t)$ be a solution to (\ref{main_equation}) with drift $F \in \mathbb F$, which is initiated by $\mathcal A_0 = \{1\}\times \mathbb{R}^{n-1}$. We define $\sigma^F$ to be the first time that $B_t$ hits  $\mathcal{A}_t = \psi^F_t(\mathcal A_0)$.  
Define $\overline F \equiv c^*n^{3/4}$, and note that $F \leq \overline F$ for all $F\in \mathbb{F}$.  

Let $\varepsilon>0$ be arbitrary small. From the statement of Theorem 2 it follows that there exists $T_0$ large enough, such that for all $T>T_0$, we have
$$
P(\sigma^{ \overline F}>T) < \varepsilon.
$$
We observe that in the proof of Theorem \ref{theorem-hit} the only bound on $F$ which was used is $\|F\|_{\infty} \leq c^*n^{3/4}$, see the second paragraph of Step 1 in Proposition \ref{prop-rw-hit} and the bounds on $\mathcal D^{\perp}_{\tau_1}$ and $\mathcal D^{1}_{\tau_1}$ in Steps 4 and 5 of Proposition \ref{prop-rw-hit} (respectively). We therefore conclude  that for all $T>T_0$
\begin{equation}\label{eq:first-epsilon-est} 
\sup_{F\in \mathbb{F}} P(\sigma^{  F}>T) < \varepsilon. 
\end{equation} 
Now choose $m>0$ depending on $\varepsilon$ and the dimension $n$ such 
that 
\begin{equation}
\label{eq:second-epsilon-est}
P\Big(\sup_{0\leq t\leq T}\|B^{(n-1)}_t\|>m\Big)<\varepsilon.  
\end{equation}
Let $A^F_m$ be the event that $B_t$ hits $\mathcal{A}_t^{F,m} = \psi^F_t(\mathcal{A}^{F,m}_0)$.  Also, let 
$\partial\mathcal{A}_t^{F,m}$ denote the image of the boundary of 
the original ball $\mathbf{B}^{(n-1)}_m(0)$ under the same drift.  We also note that the drift 
experienced by  $\mathcal{A}_t^{F,m}$ is continuous as long as $B_t$ has not yet hit  $\mathcal{A}_t^{F,m}$.  

Note that as long as $\|B^{(n-1)}_t\|<m$, the drift induced on 
$\partial\mathcal{A}_t^{F,m}$ by $B_t$, when projected on 
$\mathbb{R}^{n-1}$, and further projected onto the radial direction in 
$\mathbb{R}^{n-1}$, points away from the origin.  It follows that the 
projection of $\partial\mathcal{A}_t^{F,m}$ onto $\mathbb{R}^{n-1}$ lies 
in the exterior of the ball $\mathbf{B}_m(0)$, and in fact encloses this 
ball.  By continuity, it follows that as long as $\|B^{(n-1)}_t\|<m$, we 
have $\mathcal{A}_t^{F,m}\supset\mathbf{B}^{(n-1)}_m(0)$.  

By the above and (\ref{eq:first-epsilon-est}), we have
\begin{equation*}
\sup_{F\in \mathbb{F}} P(A^F_m)>1-2\varepsilon  
\end{equation*}
and this completes the proof of Lemma~\ref{lem:uniform-in-F}.
\end{proof} 

\begin{proof}[Proof of Theorem \ref{theorem-hit2}]
By the hypothesis of Theorem \ref{theorem-hit2} we can assume that 
$\mathcal{A}_0$ contains a cylinder 
\begin{equation*}
\mathbf{C}_{\delta}(a)=(a_1,a_1+\delta)
 \times\mathbf{B}^{(n-1)}_{\delta}(a^\perp)
\end{equation*}
for some $a\in\mathbb{R}^{n}$ and some $\delta>0$.  
Furthermore, we can assume that $a_1>0$.  


Let $0<\varepsilon<a_1/2$, $\varepsilon_1>0$, and let $\sigma$ be the first 
time $t$ that $B_t^{(1)}=a_1-\varepsilon/2$.  It is easy to see that there 
is a positive probability that $\|B_\sigma^\perp\|<\varepsilon$ and 
$\sigma<\varepsilon_1$. Thus, if 
$\varepsilon_{1}$ is small enough and $\sigma<\varepsilon_1$, the integral of the 
drift from time 0 to time $\sigma$, acting on 
$\mathbf{C}_\delta(a)$ up to time $\sigma$ will be arbitrarily small.  It 
follows that there is a positive probability that 
$\|B_\sigma^\perp\|<\varepsilon$ and $\psi_\sigma^{\mathcal{A}_0}(\mathbf{C}_\delta(a))$ 
contains the disc 
$(a_1+\varepsilon/2)\times \mathbf{B}^{(n-1)}_{\delta/2}(a^\perp)$.  So, 
$B_\sigma^{(1)}=a_1-\varepsilon/2$ has distance $\varepsilon$ from the line 
$x_{1}=a_1+\varepsilon/2$, and if $\varepsilon<\delta/4$ then 
\begin{equation*}
\mathbf{B}^{(n-1)}_{\delta/2}(a^\perp)
\supset \mathbf{B}^{(n-1)}_{\delta/4}(B_\sigma^\perp).
\end{equation*}

Using Lemma \ref{lem:uniform-in-F}, choose a natural number $m$ such that 
\begin{equation}
\label{eq:assume-hit}
\inf_{F\in\mathbb{F}}P(B_t\text{ hits }\mathcal{A}_t^{F,m})>0.  
\end{equation}
Using the strong Markov property, let us start over at time $\sigma$.  Next, 
by translation and by our scaling (Lemma \ref{lem:scaling}), we find that 
our new Brownian motion starts at 0, and that
$(a_1+\varepsilon/2)\times\mathbf{B}^{(n-1)}_{\delta/4}(B_\sigma^\perp)$ has been 
transformed to a set containing $\{1\}\times\mathbf{B}_K^{(n-1)}(0)$ with 
$K>\delta/(4\varepsilon)$.  If $\varepsilon>0$ is 
small enough, we conclude that $K>m$ and so 
\begin{equation*}
\{1\}\times\mathbf{B}_K^{(n-1)}(0)
\supset \{1\}\times\mathbf{B}_m^{(n-1)}(0).
\end{equation*}
Using (\ref{eq:assume-hit}), we conclude that with positive probability, 
$B_t$ hits $\mathcal{A}_t^{F,K}$.  Since our original region $\mathcal{A}_t$, 
after scaling, contains $\mathcal{A}_t^{F,K}$, it follows that $B_t$ hits 
$\mathcal{A}_t$ with positive probability.  

This completes the proof of Theorem \ref{theorem-hit2}.
\end{proof}

\section*{Acknowledgements}
We are very grateful to anonymous referees for careful reading of the manuscript, and for a number of useful comments and suggestions that significantly improved this paper. We also thank Alexey Kuznetsov who proposed the problem that we solve in this paper.   


\bibliographystyle{plain}

\def\cprime{$'$} \def\cprime{$'$} \def\cprime{$'$}

\end{document}